\def\({\left(}
\def\){\right)}
\def\ep{\varepsilon}
\def\bar{\overline}
\def\Sign{\mathrm{Sign}}
\def\mc#1{\mathcal{#1}}
\def\<{\left<} \def\>{\right>}
\def\RR{\mathbbm R} \def\R{\mathcal R}
\def\F{\mathcal F} 
\renewcommand{\restriction}{|} 
\newenvironment{grafo}[1]{\begin{xy}0;<#1,0cm>: }{\end{xy}}
\newcommand{\Nodo}[5][5mm]{\POS *=<#1>[o]{#3}="#2"+#4(1.5)*{#5}}
\newcommand{\NodoO}[5][5mm]{\POS *=<#1>[o]{#3}*\frm{o}="#2"+#4(1.5)*{#5}}
\newcommand{\arcoD}[3][]{\ar@{-}"#2";"#3"#1}
\newcommand{\arcoDot}[3][]{\ar@{.}"#2";"#3"#1}
\newcommand{\arcoDotU}[3][]{\ar@(ur,ul)@{-}"#2";"#3"#1}
\newcommand{\loopU}[2][]{\ar@(ur,ul)@{-}"#2";"#2"#1}
\newcommand{\loopD}[2][]{\ar@(dr,dl)@{-}"#2";"#2"#1}
\newtheorem{theorem}{Theorem}[section]
\newtheorem{lemma}[theorem]{Lemma}
\newtheorem{corollary}[theorem]{Corollary}
\newtheorem{definition}[theorem]{Definition}
\def\E{\mathcal{E}}
\def\V{\mathcal{V}}
\def\SS{\mathcal{S}}
\def\L{\mathcal{L}}
\def\F{\mathcal{F}}
\newcommand{\norm}[1]{\left\|#1\right\|}
\def\L{\mathit{\Delta}}
\titleformat*{\section}{\large\bfseries}
\titleformat*{\subsection}{\itshape}
\titleformat*{\subsubsection}{\footnotesize\bfseries}
\begin{document}
\title{A nodal domain theorem and a higher-order Cheeger inequality for the graph \lowercase{$p$}-Laplacian}

\author{Francesco Tudisco\footnote{tudisco@cs.uni-saarland.de, corresponding author}, Matthias Hein\footnote{hein@math.uni-sb.de}\\[5px]
Department of Mathematics and Computer Science, Saarland University,\\ Campus E1 1, 66123 Saarbr\"ucken, Germany}

\date{}

\maketitle

\begin{abstract}
We consider the nonlinear graph $p$-Laplacian and its set of eigenvalues and associated eigenfunctions of this operator defined by a variational principle. We prove a nodal domain theorem  for the graph $p$-Laplacian for any  $p\geq 1$. While for $p>1$ the bounds on the number of weak and strong nodal domains are the same as for the linear graph Laplacian ($p=2$), the behavior changes for $p=1$. We show
that the bounds are tight for $p\geq 1$ as the bounds are attained by the eigenfunctions of the graph $p$-Laplacian on two graphs. Finally, using the properties of the nodal domains,
we prove a higher-order Cheeger inequality for the graph $p$-Laplacian for $p>1$. If the eigenfunction associated to the $k$-th variational eigenvalue of the graph $p$-Laplacian has exactly $k$ strong nodal domains, then the higher order Cheeger inequality becomes tight as $p\rightarrow 1$.
\end{abstract}

\noindent\textbf{Keywords:} 
Graph $p$-Laplacian, nodal domains, variational eigenvalues, Cheeger inequality, graph clustering\\
\textbf{2010 MSC:} %
 		47H30, 
 		39A12, 
 		39A70, 
 		68R10 
 		
\section{Introduction}
In this paper we are concerned with the $p$-Laplacian operator $\L_p$ on finite, undirected, weighted graphs $G=(\V,\E)$. For simplicity we assume throughout the paper that the graph $G$ is connected. A function $f$ on $\V$ is an eigenfunction of $\L_p$ corresponding to the eigenvalue $\lambda$ if it solves the following eigenvalue problem
\begin{equation}\label{eq:eigenequation}
 (\L_p f)(u)= \lambda \, \mu(u)|f(u)|^{p-2}f(u), \quad \forall u \in \V
\end{equation}
The eigenvalue problem for the linear graph Laplacian is obtained for $p=2$. The spectrum of $\L_2$ has been studied extensively in past decades. In particular every eigenvalue of $\L_2$ admits a variational characterization in terms of a Rayleigh quotient and several relations between the eigenvalues of $\L_2$ and a number of graph invariants have been established \cite{chung, chung-advmath,expanders}. However, for $p\neq 2$, the spectral properties are less well understood. For the general case it is known that the smallest eigenvalue $\lambda_1$ of $\L_p$ is zero, it is simple and  any corresponding eigenfunction is constant. It is also known that the smallest nonzero eigenvalue $\lambda_2$ admits a variational characterization \cite{Amghibech2003, Anane-Tsouli,Thomas2009}.  The Lusternik-Schnirelman theory allows to generalize the variational characterization of the linear case and to define a sequence of variational eigenvalues of $\L_p$ for $p\neq 2$. 

In this paper we investigate the nodal properties of the eigenfunctions of $\L_p$.
A strong nodal domain of $f$ is a maximal connected component of $\{u: f(u)\neq 0\}$. Weak nodal domains, instead, can overlap and are defined as the maximal connected components of either $\{u : f(u)\geq 0\}$ or $\{u: f(u)\leq 0\}$.  The famous Courant nodal domain theorem provides upper bounds on the number of nodal domains of the eigenfunctions of the continuous Laplacian in $\RR^d$. Several authors worked afterwards on a discrete version of the nodal domain theorem for the case of the linear graph Laplacian, see e.g. 
\cite{nodal-domain-theorem, Verdiere1993,duval-nodal-domains,fiedler-vector}, or the adjacency or modularity matrix \cite{Fasino2014,powers-graph-eigenvector}.
The main contribution of the present work is a unifying generalized version  of the Courant nodal domain theorem for the graph $p$-Laplacian, for any $p\geq 1$. We show that for $p>1$ the bounds on the number of weak and strong nodal domains are the same as for the linear graph Laplacian whereas the upper bound of the weak nodal domains changes for $p=1$. 

As there are strong relations between the continuous and discrete theory, it is worthwhile to quickly review the eigenproblem of the continuous $p$-Laplacian.
If $\Omega$ is a bounded domain in $\RR^d$, with smooth boundary $\partial \Omega$,  the continuous analogous of \eqref{eq:eigenequation} is 
\begin{equation}\label{eq:continuous_eigenequation}
 -\mathrm{div}(|\nabla f|^{p-2}\nabla f)=\lambda |f|^{p-2}f,  \qquad \mathrm{in} \, \, \Omega
\end{equation}
where homogeneous conditions are assumed on $\partial \Omega$. The  eigenvalue problem \eqref{eq:continuous_eigenequation} has been studied extensively. When $d =1$, for instance, the spectrum of $\L_p$ is completely described \cite{Drabek1992,Elbert1979}. 
Courant's nodal theorem has been then extended to the eigenfunctions of the continuous $p$-Laplacian \eqref{eq:continuous_eigenequation} for $p>1$ \cite{Anane-Tsouli, Drabek2002}, where connected components are replaced by connected open subsets. However, note that the difference between strong and weak nodal domains is not considered in the
continuous case. Moreover, \cite{Drabek2002} require as an assumption what they call the \emph{unique continuation property} to prove the direct generalization of the Courant nodal domain theorem.

As a second main contribution, we provide a higher-order Cheeger inequality relating the $k$-th variational eigenvalue of the graph $p$-Laplacian with the $k$-th isoperimetric
or Cheeger constant of the graph. The Cheeger constant $h(G)$ is one of the most important topological invariants of a graph $G$. 
The result for the case $k=2$ was originally proven Cheeger \cite{cheeger} for compact Riemannian manifolds and the associated Laplace-Beltrami operator.

	Several authors extended afterwards Cheeger result to the discrete case, see e.g. \cite{Alon1986,Alon1985,Dodziuk1984,Fasino2014,Lawler1988,Mohar1989,Sinclair1989}. 
	Cheeger's inequality plays an important role in the theory of expander graphs, mixing time of Markov chains, graph coloring but has also applications in computer science such as image segmentation and web search, see e.g.\ \cite{Alon1985,Chung2010,Kwok2013,Lee2012} and the references therein. An extension of Cheeger's inequality to the nonlinear graph $p$-Laplacian has been shown in \cite{Amghibech2003,Thomas2009,Hein2010} for $p\geq 1$, where it has been shown that the Cheeger constant $h(G)$ of the graph is the limit of $\lambda_2$ of $\L_p$ as $p\to 1$, with equality for $p=1$. A number of Cheeger type inequalities have been shown for  the continuous $p$-Laplacian in \eqref{eq:continuous_eigenequation} as well \cite{Caroccia2015,Kawohl2008,Keller2015}.
	
More recently, a set of high-order isoperimetric constants $h_k(G)$, $k = 1,2,3,\dots$ alternatively called multi-way Cheeger constants,  has been introduced by Miclo \cite{Daneshgar2010,Miclo2012} in the discrete setting.	We provide a Cheeger-type inequality relating the $k$-th variational eigenvalue of $\L_p$ with the $k$-way isoperimetric constant, where the number of strong nodal domains of the eigenfunctions of $\L_p$ plays a crucial role.

The paper is organized as follows. In Section \ref{sec:preliminaries} we fix the notation and provide a number of first results. Section \ref{sec:nodal-theorem} contains the statement of the nodal domain theorem for $\L_p$. In Sections \ref{sec:Pn} and \ref{sec:L1} we discuss the non-smooth case $p=1$ and prove the tightness of our results   for $p\geq 1$, by studying the eigenfunctions of the graph $p$-Laplacian for two graphs where the upper bounds on the nodal domain counts are attained. Finally, in Section \ref{sec:cheeger} we discuss a higher-order Cheeger inequality for the graph $p$-Laplacian.

\section{Preliminaries}\label{sec:preliminaries}
Let $G = (\V, \E)$ be a finite, connected and undirected graph where $\V=(V,\mu)$ and $\E = (E,w)$ are the vertex and edge sets, equipped with positive measures $\mu$ and $w$, respectively.
The number of nodes $|V|$ of $G$ is denoted by $n$. We assume that the vertex weights are strictly positive, $\mu(u)>0$ for all $u \in V$.
 An element of $E$ is denoted by $uv$, where $u,v$ are the nodes connected by $uv$. We extend the measure function $w$ to the whole product $V\times V$ by letting $w(uv)=0$ if $uv\notin E$.  We write $u \sim v$ to indicate that there exists an edge $uv \in E$ between those two nodes. Similarly, for two sets of nodes $A,B \subseteq V$, we write $A\approx B$ if there exists an edge connecting $A$ and $B$. Finally, we define $\ell^p(\V)$ to be the space of real valued functions on $V$ endowed with the norm $\|f\|_{\ell^p(\V)}=(\sum_{u}\mu(u)|f(u)|^p)^{1/p}$.  

Let $\Phi_p:\RR \rightarrow \RR$ defined as $\Phi_p(x)=|x|^{p-2}x$, then the graph $p$-Laplacian $\L_p:\RR^{\V} \rightarrow \RR^{\V}$ is defined for $p>1$ as
$$(\L_p f)(u) = \sum_{v \in V}w(uv)\Phi_p(f(u)-f(v)), \qquad \forall u \in V.$$
The case $p=1$ will be treated in Section \ref{sec:L1}, even though parts of the following discussion apply already for the case $p=1$.
A function $f:V\to \RR$  is an eigenfunction of $\L_p$ associated with the eigenvalue $\lambda$ if the following identity holds 
$$(\L_p f)(u) =\lambda\, \mu(u) \Phi_p(f(u)), \qquad \forall u \in V.$$
We then consider the even functional $\R_p: \RR^{\V} \rightarrow \RR$ defined as
$$\R_p(f) = \frac 1 2 \frac{\sum_{uv\in E}w(uv)|f(u)-f(v)|^p}{\sum_{u\in V} \mu(u)|f(u)|^p}$$
and the symmetric manifold $\SS_p = \{g\in \ell^p(\V) : \|g\|_{\ell^p(\V)}=1\}$. It is easy to see that the eigenvalues and eigenfunctions of $\L_p$ correspond to the critical values and critical points of $\R_p$. Moreover, as $\R_p$ is scale invariant, $\R_p(\alpha f)=\R_p(f)$ for any nonzero $\alpha \in \RR$, $f$ is a critical point of $\R_p$ if and only if $f/\|f\|_{\ell^p(\V)}$ is a critical point of the restriction $\R_p|_{\SS_p}$ of $\R_p$ onto $\SS_p$, and they correspond to the same critical value. 

The Lusternik-Schnirelman theory allows several ways to characterize a sequence of variational eigenvalues of $\L_p$. A standard approach, relaying on the symmetry of $\R_p$ and $\SS_p$, is based on the notion of Krasnoselskii genus. 
\begin{definition}
	Let $A$ be a closed, symmetric subset of $\SS_p$. We define the Krasnoselskii genus of $A$ as
	$$\gamma(A)=\begin{cases}
	0  & \text{if }A = \emptyset\\
	& \!\!\!\!\!\!\!\!\!\!\!\! \inf\{m \mid \exists h:A \to \RR^m\setminus\{0\},\, \,  continuous,\, \, h(-u)=-h(u)\} \\
	\infty & \text{if } \{...\}=\emptyset, \text{ in particular if }0\in A
	\end{cases}\,\,\, .$$
\end{definition}
Consider the family $\F_k(\SS_p)=\{A \subseteq \SS_p: A=-A, \text{closed}, \gamma(A)\geq k\}$. As  $\SS_p$ is compact, it is easy to verify that $\R_p|_{\SS_p}$ satisfies the Palais-Smale condition. Then  
\begin{equation}\label{eq:lambdas}
		\lambda_k^{(p)} = \min_{A \in \mc F_k(\SS_p)}\max_{f \in A}\, \R_p(f)
	\end{equation}
defines a sequence of $n$ critical values of the Rayleigh quotient $\R_p$, for $ k=1,\dots, n$. Moreover, it is known that  the connectedness of $G$ implies $0=\lambda_1^{(p)}<\lambda_2^{(p)}\leq \dots \leq \lambda_n^{(p)}$. Note that in the definition \eqref{eq:lambdas} we can freely use $\R_p$ or its restriction to $\SS_p$, as the critical values do not change. 
From now on we shall call the numbers  $\lambda_k^{(p)}$, for $k=1, \dots, n$, the \textit{variational eigenvalues} of $\L_p$. For ease of notation we will often drop the superscript, writing $\lambda_k$ in place of $\lambda_k^{(p)}$, when the reference to a given $p$ is clear from the context.

Until now we have concentrated on the variational eigenvalues but the nodal domain theorem requires to consider eigenfunctions. It turns out that one can associate at least one
eigenfunction of the graph $p$-Laplacian to each variational eigenvalue $\lambda_k^{(p)}$.
For a  function $F:\mc \SS_p\to \RR$ we write  $F^{c}=\{x \in \mc \SS_p : F(x)\leq c\}$ and ${K_\lambda}(F)= \{x \in \mc \SS_p : F(x)=\lambda, \nabla F(x)=0\}$. 
The following theorem is  part of a more general result known as \textit{deformation theorem}. See for instance \cite[Thm. 3.11]{Struwe2013} or \cite[Thm. 4.1.19]{Papageorgiou2009}
\begin{theorem}\label{thm:deformation}
	Let $F:\mc S_p \to \RR$ be an even function satisfying the Palais-Smale condition. Then for any $\ep_0>0$, $\lambda \in \RR$ and any neighbourhood $N$ of ${K_\lambda}(F)$, there exists $\ep \in (0,\ep_0)$ and an odd homeomorphism $\theta:\mc \SS_p\to \SS_p$ such that 
	$$\theta(F^{\lambda+\ep}\setminus N)\subseteq F^{\lambda-\ep}\, .$$
\end{theorem}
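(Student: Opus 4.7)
The plan is to implement the standard negative-pseudo-gradient flow argument, maintaining the antipodal symmetry throughout. The first step is to upgrade the Palais--Smale hypothesis to a quantitative gradient bound: since any sequence $\{x_n\}\subset \SS_p$ with $F(x_n)\to\lambda$ and $\|\nabla F(x_n)\|\to 0$ has a subsequence converging to a point of $K_\lambda(F)$, for any given neighbourhood $N$ of $K_\lambda(F)$ there exist a smaller symmetric open $N'$ with $\overline{N'}\subset N$, a constant $\delta>0$, and an $\ep_1\in(0,\ep_0)$ such that $\|\nabla F(x)\|\geq \delta$ on the symmetric slab $\{x:|F(x)-\lambda|\leq 2\ep_1\}\setminus N'$. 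As a by-product $K_\lambda(F)$ is compact, which is the only substantive use of the Palais--Smale condition.

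Next, I would construct a locally Lipschitz, tangent, odd vector field $V$ on $\SS_p$ with $\|V(x)\|\leq 1$ and $\langle V(x),\nabla F(x)\rangle\leq -\tfrac12\|\nabla F(x)\|$ off $K_\lambda(F)$. This is the classical pseudo-gradient of Palais; oddness is enforced by replacing $V(x)$ with $\tfrac12\bigl(V(x)-V(-x)\bigr)$, which still satisfies the defining inequality because $F$ even implies $\nabla F$ odd. I then localize $V$ by multiplying by a Lipschitz even cut-off $\eta:\SS_p\to[0,1]$ that vanishes on $N'$ and on $\{|F-\lambda|>2\ep_1\}$ and equals $1$ on $\{|F-\lambda|\leq \ep\}\setminus N$, where $\ep\in(0,\ep_1)$ will be fixed at the end.

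The odd Lipschitz field $\eta V$ has a complete flow $\phi_t:\SS_p\to\SS_p$; each $\phi_t$ is an odd homeomorphism, and along any trajectory inside the support of $\eta$ we have $\tfrac{d}{dt}F(\phi_t(x))\leq -\delta/2$. Picking $T>4\ep/\delta$ and setting $\theta=\phi_T$, a case analysis on where the trajectory goes finishes the proof: either it stays in $\{|F-\lambda|\leq \ep\}\setminus N$ up to time $T$, whence $F(\phi_T(x))\leq \lambda+\ep-T\delta/2\leq\lambda-\ep$; or it exits, and by monotonicity of $F$ along the flow together with the gradient bound blocking entry into $N'$, the only admissible exit is through the lower face $F=\lambda-\ep$, so again $F(\phi_T(x))\leq\lambda-\ep$.

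The main technical obstacle I anticipate is the coherent choice of the buffer $N'\subset N$ and the parameters $\ep,\delta,T$: one must rule out a trajectory sneaking into the annulus $N\setminus N'$, where $\eta$ is small and so $F$ need not decrease, and then leaving it again at a level close to $\lambda+\ep$. This is handled by shrinking $\ep$ below a constant multiple of $\delta\cdot \mathrm{dist}(\partial N',\partial N)$, which bounds the total $F$-variation the flow can accumulate while inside the buffer. The symmetric setting itself adds only bookkeeping: oddness is automatic once $V$ and $\eta$ are built symmetrically, and the tangency of $V$ to $\SS_p$ keeps the flow on the manifold.
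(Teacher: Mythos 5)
The paper does not actually prove this statement: it is quoted as a known deformation theorem, with references to Struwe (Thm.\ 3.11) and Papageorgiou (Thm.\ 4.1.19). Your proposal reconstructs precisely the standard proof found in those sources -- odd pseudo-gradient field (symmetrized via $\tfrac12(V(x)-V(-x))$, legitimate because $F$ even makes $\nabla F$ odd), even cut-off localizing to a slab around level $\lambda$ and away from a buffer neighbourhood of $K_\lambda(F)$, quantitative gradient bound from Palais--Smale, and the flow-for-finite-time case analysis -- so it is the same route, not a new one. The only point stated too loosely is the claim that ``the only admissible exit is through the lower face'': a trajectory starting in $F^{\lambda+\ep}\setminus N$ can perfectly well exit into the annulus $N\setminus N'$, where your cut-off $\eta$ is allowed to be small, so monotonicity alone does not finish that case. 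The fix is exactly the quantitative relation you name, but the mechanism should be made explicit: wherever $\eta>0$ inside the slab and outside $N'$, the decrease of $F$ \emph{per unit arclength} of the trajectory is at least $\delta/2$ (since $-\tfrac{d}{dt}F\geq \tfrac{\eta\delta}{2}$ while the speed is at most $\eta$), so reaching $N'$ from $\partial N$ would force a drop of at least $\tfrac{\delta}{2}\,\mathrm{dist}(\partial N',\partial N)>2\ep$, which is impossible while $F$ stays above $\lambda-\ep$; hence the trajectory either stays in the region where $\eta=1$, $\|\nabla F\|\geq\delta$ long enough, or has already fallen below $\lambda-\ep$, and monotonicity of $F$ along the flow then yields the conclusion. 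With that sentence added, your argument is complete and matches the cited proofs.
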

The deformation theorem allows to show that to each variational eigenvalue belongs at least one corresponding eigenfunction.
\begin{lemma}\label{thm:existence}
	For $k\geq 1$ let $A^* \in \mc F_k(\SS_p)$ be a minimizing set, that is 
	$$\lambda_k = \min_{A \in \mc F_k(\SS_p)}\max_{f \in A}\, \R_p(f)=\max_{f \in A^*}\, \R_p(f)\, .$$
	Then $A^*$ contains at least one critical point of $\R_p$, relative to $\lambda_k$.
\end{lemma}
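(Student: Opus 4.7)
My plan is to argue by contradiction using the deformation theorem, in the classical Lusternik--Schnirelman style. Suppose that $A^*$ contains no critical point of $\mathcal{R}_p$ at level $\lambda_k$, i.e.\ $A^* \cap K_{\lambda_k}(\mathcal{R}_p) = \emptyset$. The Palais--Smale condition on $\mathcal{S}_p$ guarantees that $K_{\lambda_k}(\mathcal{R}_p)$ is compact; since $A^*$ is closed and disjoint from this compact set, I can pick an open symmetric neighbourhood $N$ of $K_{\lambda_k}(\mathcal{R}_p)$ with $N \cap A^* = \emptyset$. (Symmetry of $N$ may be enforced by replacing $N$ with $N \cap (-N)$, which still contains the symmetric set $K_{\lambda_k}(\mathcal{R}_p)$.)

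Next I invoke Theorem~\ref{thm:deformation} applied to $F = \mathcal{R}_p|_{\mathcal{S}_p}$ at level $\lambda = \lambda_k$, with the neighbourhood $N$ just constructed. This yields some $\varepsilon \in (0,\varepsilon_0)$ and an odd homeomorphism $\theta:\mathcal{S}_p \to \mathcal{S}_p$ such that $\theta\bigl(\mathcal{R}_p^{\lambda_k+\varepsilon} \setminus N\bigr) \subseteq \mathcal{R}_p^{\lambda_k-\varepsilon}$. Because $\lambda_k = \max_{f\in A^*}\mathcal{R}_p(f)$, the minimising set satisfies $A^* \subseteq \mathcal{R}_p^{\lambda_k} \subseteq \mathcal{R}_p^{\lambda_k+\varepsilon}$, and by construction $A^* \cap N = \emptyset$. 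Consequently $\theta(A^*) \subseteq \mathcal{R}_p^{\lambda_k-\varepsilon}$, i.e.\ $\max_{f\in \theta(A^*)}\mathcal{R}_p(f) \leq \lambda_k-\varepsilon$.

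To reach a contradiction with the defining inequality for $\lambda_k$, I need $\theta(A^*) \in \mathcal{F}_k(\mathcal{S}_p)$. Closedness of $\theta(A^*)$ follows because $\theta$ is a homeomorphism of the compact set $\mathcal{S}_p$. Symmetry is a consequence of $\theta$ being odd: $\theta(A^*) = \theta(-A^*) = -\theta(A^*)$. Finally, the genus is invariant under odd homeomorphisms: if $h:\theta(A^*)\to\mathbb{R}^m\setminus\{0\}$ is odd and continuous, then $h\circ\theta:A^*\to\mathbb{R}^m\setminus\{0\}$ is also odd and continuous, and vice versa, so $\gamma(\theta(A^*)) = \gamma(A^*) \geq k$. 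Therefore $\theta(A^*) \in \mathcal{F}_k(\mathcal{S}_p)$ but achieves a strictly smaller max of $\mathcal{R}_p$ than $\lambda_k$, contradicting \eqref{eq:lambdas}.

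The only delicate point is making sure Theorem~\ref{thm:deformation} can indeed be applied with a neighbourhood that avoids $A^*$; this is where compactness of $K_{\lambda_k}(\mathcal{R}_p)$ (via Palais--Smale, which holds on the compact manifold $\mathcal{S}_p$) is essential. Everything else is a standard verification that odd homeomorphisms preserve membership in $\mathcal{F}_k(\mathcal{S}_p)$, which is the defining feature making the Krasnoselskii genus compatible with the min--max principle.
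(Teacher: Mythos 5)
Your proof is correct and follows essentially the same route as the paper: contradiction via compactness of $K_{\lambda_k}(\R_p|_{\SS_p})$ (from Palais--Smale), a neighbourhood $N$ disjoint from $A^*$, the deformation theorem, and invariance of the Krasnoselskii genus under the odd homeomorphism $\theta$ to contradict the min--max definition of $\lambda_k$. The extra details you supply (symmetrising $N$, verifying $\gamma(\theta(A^*))\geq k$) are just elaborations of steps the paper states more briefly.
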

\begin{proof}
	Consider the restriction $\R_p|_{\SS_p}$. Recall that if $F$ is a function that satisfies the Palais-Smale condition and $\lambda$ is a critical value of $F$, then $K_\lambda(F)$ is compact (see f.i.\ \cite[pp.\ 78-80]{Struwe2013}).	
	Suppose by contradiction that $A^*\cap K_{\lambda_k}(\R_p|_{\SS_p})=\emptyset$. We already discussed that  $ \R_p|_{\SS_p}$ satisfies the Palais-Smale condition, therefore $A^*$  and $K_{\lambda_k}(\R_p|_{\SS_p})$ are compact. Hence there exists a neighborhood $N$ of $K_{\lambda_k}(\R_p|_{\SS_p})$ such that $A^*\cap N=\emptyset$. Therefore $A^* = A^*\setminus N$. Since $\max_{f \in A^*}\R_p|_{\SS_p}(f) = \lambda_k$, for any $\ep>0$ we have $A^*\subseteq \R_p|_{\SS_p}^{\lambda_k+\ep}$. By the deformation theorem, there exists an odd homeomorphism $\theta:{\SS_p}\to {\SS_p}$ such that
	$$\theta(A^*) = \theta(A^*\setminus N)\subseteq \theta\Big(\R_p|_{\SS_p}^{\lambda_k+\ep}\setminus N\Big)\subseteq  \R_p|_{\SS_p}^{\lambda_k-\ep}$$
	As  $\theta$ is an odd homeomorphism we have that $A^* \in \mc F_k({\SS_p})$ implies $\theta(A^*) \in  \mc F_k({\SS_p})$. Then
	$$\lambda_k\leq \min_{A \in  \mc F_k({\SS_p})}\max_{f \in A}\, \R_p(f)\leq \max_{f \in \theta(A^*)}\R_p(f)\leq \max_{f \in  \R_p|_{\SS_p}^{\lambda_k-\ep}}\R_p(f)\leq \lambda_k-\ep$$
	and we have reached a contradiction. Therefore $A^*\cap K_{\lambda_k}( \R_p|_{\SS_p})$ cannot be empty and the lemma is proven.
\end{proof}
We would like to note that the integer valued genus $\gamma$ is a classical homeomorphism invariant generalization of the concept of dimension. Indeed if $A$ is any symmetric neighborhood  of the origin in $\RR^k$, then $\gamma(A)=k$, and, vice-versa, if $A$ is any subset such that $\gamma(A)=k$, then $A$ contains at least $k$ mutually orthogonal functions. It follows that, when $p=2$,  the sequence \eqref{eq:lambdas} boils down to the Courant-Fischer minimax principle  $\lambda_k^{(2)}=\min_{\dim(A)=k}\max_{f \in A}\R_2(f)$. We refer to  \cite[Ch.\ 4]{Papageorgiou2009} or \cite[Ch.\ 2]{Struwe2013} for an overview.

\section{Nodal domain theorem for the graph $p$-Laplacian}\label{sec:nodal-theorem}

Consider the eigenvalue problem \eqref{eq:continuous_eigenequation} and a continuous function $f$ on $\Omega$. A nodal domain for $f$ is a maximal connected open subset of $\{u:f(u)\neq 0\}$.
When $p=2$, Courant's nodal domain theorem states that any eigenfunction for \eqref{eq:continuous_eigenequation} associated to the eigenvalue $\lambda_k$ has at most $k$ nodal domains.

For graphs, nodal domains induced by a function $f:V\to \RR$ are commonly defined as follows: 
\begin{definition}   \label{def:strong_nd}
	Let $f:V\to \RR$. 
	A subset $A \subseteq V$ is a \emph{strong nodal domain} of $G$ induced by $f$ if the subgraph $G(A)$ induced on $G$ by $A$ 
	is a maximal connected component of either $\{u: f(u) > 0\}$ or  $\{u: f(u) < 0\}$.
\end{definition}

\begin{definition}   \label{def:weak_nd}
	Let $f:V\to \RR$. 
	A subset $A \subseteq V$ is a \emph{weak  nodal domain} of $G$ induced by $f$ if the subgraph $G(A)$ induced on $G$ by $A$ 
	is a maximal connected component of either 
	$\{u: f(u) \geq 0\}$  or $\{u: f(u) \leq 0\}$. 
\end{definition}

For any connected graph $G$ and any $p \geq 1$, $\lambda_1^{(p)} = 0$ is simple and any associated eigenfunction is constant.
Thus the strong and weak nodal domain for the eigenfunctions of $\lambda_1^{(p)}$ is $V$ itself. 

Fiedler noted in \cite[Cor.\ 3.6]{fiedler-vector}
that the number of weak nodal domains induced by 
any eigenfunction associated to $\lambda_2^{(2)}$
 is exactly two. 
Several authors derived analogous results to the Courant nodal theorem for the higher-order eigenfunctions of  $\L_2$
\cite{nodal-domain-theorem, duval-nodal-domains, powers-graph-eigenvector}.  
The following nodal domain theorem for the graph Laplacian $\L_2$  summarizes their work:
\begin{theorem}\label{thm:laplacian-domains}
	Let $G$ be connected and $0=\lambda_1<\lambda_2\leq \dots \leq \lambda_n$ 
	be the eigenvalues of $\L_2$. Any eigenfunction of $\lambda_k$ induces at most $k$ weak nodal domains and at most $k+r-1$ strong nodal domains, where $r$ is the multiplicity of $\lambda_k$.
\end{theorem}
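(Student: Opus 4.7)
My plan is to combine the variational characterization of $\lambda_k^{(2)}$ (which for $p=2$ reduces to the classical Courant-Fischer min-max over $k$-dimensional subspaces, as noted after Lemma \ref{thm:existence}) with explicit test functions obtained by restricting the eigenfunction $f$ to its nodal domains.

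The key technical step is a \emph{restriction inequality}: for any subset $A \subseteq V$ on which $f$ has constant sign, the function $f_A := f \cdot \mathbb{1}_A$ satisfies $\R_2(f_A) \leq \lambda_k$. I would derive it by multiplying the eigenequation $(\L_2 f)(u) = \lambda_k \mu(u) f(u)$ by $f(u)$ and summing over $u \in A$. Splitting the right-hand side according to whether an edge is internal to $A$ or crosses its boundary gives
$$\lambda_k \|f_A\|_{\ell^2(\V)}^2 \;=\; \sum_{\substack{uv \in E \\ u,v \in A}} w(uv)(f(u)-f(v))^2 \;+\; \sum_{\substack{uv \in E \\ u \in A,\, v \notin A}} w(uv)\, f(u)\bigl(f(u)-f(v)\bigr).$$
Because $A$ is a nodal domain, $f(v)$ is either zero or of sign opposite to $f(u)$ on boundary edges, so $f(u)(f(u)-f(v)) \geq f(u)^2 = (f_A(u)-f_A(v))^2$ there; the right-hand side therefore upper bounds the Dirichlet energy of $f_A$, yielding $\R_2(f_A) \leq \lambda_k$.

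For the strong bound, let $D_1, \ldots, D_m$ be the strong nodal domains of $f$. Since vertices with $f=0$ belong to no strong nodal domain, the supports are pairwise disjoint and the $f_{D_i}$ are linearly independent. Their span is therefore an $m$-dimensional subspace on which $\R_2 \leq \lambda_k$, so Courant-Fischer gives $\lambda_m^{(2)} \leq \lambda_k$; with multiplicity $r$, the largest index where the variational eigenvalue is still $\lambda_k$ is at most $k+r-1$, so $m \leq k+r-1$.

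For the weak bound, the analogous test functions $g_i := f \cdot \mathbb{1}_{W_i}$ on weak nodal domains $W_i$ are still linearly independent (any overlap between distinct $W_i$ lies in $\{f=0\}$, where all $g_i$ vanish) and satisfy $\R_2(g_i) \leq \lambda_k$, but this only yields the weaker bound $m \leq k+r-1$. Sharpening to $m \leq k$ is the main obstacle. I would approach it by a perturbation argument inside the eigenspace of $\lambda_k$: show that the weak nodal count is upper semicontinuous under small deformations of $f$ within the eigenspace, and that for a generic representative $\tilde{f}$ the zero set becomes empty so that weak and strong counts of $\tilde{f}$ coincide; a Sylvester/inertia type argument on the signed incidence structure of $\tilde{f}$ then restricts its strong count to at most $k$, removing the multiplicity slack. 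Making the genericity and monotonicity claims rigorous, and in particular tracking exactly how sign coincidences on $\{f=0\}$ interact with the effective dimensional count, is the delicate part of the argument.
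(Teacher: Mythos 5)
Your strong-domain argument follows the paper's route (restriction to nodal domains plus the min-max characterization), but it has an unjustified jump: from the per-domain inequality $\R_2(f_{D_i})\leq\lambda_k$ you conclude that $\R_2\leq\lambda_k$ on the whole span of $f_{D_1},\dots,f_{D_m}$. That does not follow from disjointness of supports alone, because for $g=\sum_i\alpha_i f_{D_i}$ the edges joining two different nodal domains contribute cross terms $(\alpha_i f(u)-\alpha_j f(v))^2=\alpha_i^2f(u)^2+\alpha_j^2f(v)^2+2\alpha_i\alpha_j|f(u)||f(v)|$ (recall $f(u)f(v)<0$ on such edges), which are not controlled by the individual restriction inequalities. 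One needs the estimate $(\alpha_i f(u)-\alpha_j f(v))^2\leq(\alpha_i^2|f(u)|+\alpha_j^2|f(v)|)(|f(u)|+|f(v)|)$, i.e.\ $2\alpha_i\alpha_j\leq\alpha_i^2+\alpha_j^2$, applied edgewise; this is precisely the $p=2$ case of Lemma \ref{lem:ax-by}, and proving the span-level bound is exactly the content of the paper's Lemma \ref{le:NDS}. With that lemma in hand your conclusion $\lambda_m\leq\lambda_k=\lambda_{k+r-1}$, hence $m\leq k+r-1$, is fine, so this part is fixable but incomplete as written.

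The weak bound $m\leq k$ is, however, a genuine gap: you do not prove it, and the sketched repair is unsound. Upper semicontinuity of the weak nodal count under perturbation inside the eigenspace fails — resolving a zero vertex sitting inside a weak nodal domain can split that domain and \emph{increase} the count (e.g.\ values $+1,0,+1$ on a path perturbed to $+1,-\ep,+1$) — and a zero-free "generic" representative need not exist, since all eigenfunctions of $\lambda_k$ may vanish at some vertex; moreover, even for a zero-free eigenfunction it is unclear why the strong count would be bounded by $k$ rather than $k+r-1$. The paper (Theorem \ref{thm:p_laplacian-weak-domains}, which specializes to $p=2$) argues differently: if $m>k$, the weak nodal space $F$ satisfies $\max_{F\cap\SS_p}\R_p=\lambda_k$ and splits as $F=\mathrm{span}\{f\}\oplus H$ with $\dim H\geq k$, so $H\cap\SS_p$ is a minimizing set in $\mc F_k(\SS_p)$; Lemma \ref{thm:existence} (via the deformation theorem) then produces an eigenfunction $g=\sum_i\alpha_i f|_{A_i}\in H$, and the equality analysis of the cross-term inequality of Lemma \ref{lem:ax-by}, combined with the two adjacency cases P1/P2 for weak domains, forces all $\alpha_i$ to coincide, so $g$ is a multiple of $f$, contradicting $g\in H$. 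Some argument of this type (or the corresponding arguments in the cited $p=2$ literature) is needed; without it the $m\leq k$ half of the theorem remains unproven.
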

The authors of \cite{nodal-domain-theorem}, in particular, provide examples which show that the bounds for the weak and strong nodal domains are tight.
The following theorems
show that the results carry over to the $p$-Laplacian.
\begin{theorem}\label{thm:p_laplacian-strong-domains}
 Suppose that $G$ is connected and $p\geq 1$ and denote by $0=\lambda_1<\lambda_2\leq \dots \leq \lambda_n$ 
 the variational eigenvalues of $\L_p$. Let $\lambda$ be an eigenvalue of $\L_p$ such that $\lambda < \lambda_k$. Any eigenfunction associated to $\lambda$ induces at most $k-1$ strong nodal domains.
\end{theorem}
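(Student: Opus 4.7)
The plan is to prove the contrapositive: I will show that if $f$ is an eigenfunction of $\L_p$ (with $p>1$) for eigenvalue $\lambda$ that carries $m$ strong nodal domains, then $\lambda_m \leq \lambda$, so $\lambda < \lambda_k$ forces $m \leq k-1$. The route is the usual Lusternik--Schnirelman one: I will exhibit a closed symmetric $A \subseteq \SS_p$ of Krasnoselskii genus at least $m$ on which $\R_p \leq \lambda$, and then invoke the variational characterisation \eqref{eq:lambdas}.

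Let $A_1,\dots,A_m$ denote the strong nodal domains and set $f_i := f \cdot \mathbf{1}_{A_i}$. These are nonzero functions with pairwise disjoint supports, hence linearly independent, and the continuous odd map $t \mapsto (\sum_i t_i f_i)/\|\sum_i t_i f_i\|_{\ell^p(\V)}$ sends the Euclidean unit sphere $S^{m-1}\subset\RR^m$ onto a compact symmetric set $A\subseteq\SS_p$. Standard properties of $\gamma$ (any continuous odd map from $A$ into some $\RR^{m'}\setminus\{0\}$ composes with this parametrisation to contradict the Borsuk--Ulam theorem unless $m'\geq m$) then give $\gamma(A)\geq m$, so $A\in\F_m(\SS_p)\subseteq\F_k(\SS_p)$ whenever $m\geq k$.

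The core of the argument is to show $\R_p(g)\leq \lambda$ for every nonzero $g=\sum_i t_i f_i$. Disjoint supports immediately give $\sum_u \mu(u)|g(u)|^p = \sum_i |t_i|^p D_i$ with $D_i:=\sum_{u\in A_i}\mu(u)|f(u)|^p$. For the numerator I would classify each edge $uv$ as (a) internal to some $A_i$, (b) joining two distinct strong nodal domains $A_i,A_j$, or (c) joining an $A_i$ to the zero set of $f$. Testing the eigenvalue equation against $f_i$, and using that maximality of the nodal domains forces any $v\notin A_i$ adjacent to $u\in A_i$ to satisfy $f(u)f(v)\leq 0$, produces the identity $\lambda D_i = N(f_i) + s_i$, where $N(f_i)$ is the numerator of $\R_p(f_i)$ and $s_i\geq 0$ is a non-negative slack contributed only by the type (b) edges, with per-edge contribution $w(uv)|f(u)|\bigl[(|f(u)|+|f(v)|)^{p-1} - |f(u)|^{p-1}\bigr]$.

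Weighting these identities by $|t_i|^p$ and summing, the desired bound $\R_p(g)\leq \lambda$ reduces edge by edge, on the type (b) edges, to
\begin{equation*}
|t_i a + t_j b|^p \;\leq\; (a+b)^{p-1}\bigl(|t_i|^p a + |t_j|^p b\bigr), \qquad a,b>0,\ t_i,t_j\in\RR,
\end{equation*}
with $a=|f(u)|$ and $b=|f(v)|$. This is a consequence of the triangle inequality together with Jensen's inequality applied to the convex function $x\mapsto x^p$ with weights $a/(a+b)$ and $b/(a+b)$. I expect the main difficulty to lie in the bookkeeping that makes this strategy work: for $p=2$ the corresponding bound follows from a matrix-positivity argument exploiting $\sum_j\langle f_i,\L_2 f_j\rangle=\lambda\|f_i\|_{\ell^2(\V)}^2$, but no such linear-algebraic shortcut is available for general $p>1$, and one must account for the slack $s_i$ explicitly and verify that the sharp convexity bound above is exactly what is needed to absorb the type (b) cross terms in $N(g)$.
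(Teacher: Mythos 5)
Your proposal is correct and follows essentially the same route as the paper: the slack identity you obtain by testing the eigenvalue equation against each $f|_{A_i}$ is the content of the paper's Lemma~\ref{le:NDS} (that $\R_p\leq\lambda$ on the strong nodal space), the per-edge convexity bound you isolate on cross-domain edges is exactly the paper's Lemma~\ref{lem:ax-by}, and the genus/minimax conclusion $\lambda_m\leq\lambda<\lambda_k\Rightarrow m\leq k-1$ is identical. The only cosmetic difference is that you state the argument for $p>1$, whereas it goes through verbatim for $p=1$ (as the paper notes in Section~\ref{sec:L1}), since the convexity inequality and the nonnegativity of the slack only require $p\geq 1$.
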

We get as a consequence that, if the variational eigenvalue $\lambda_k$ has multiplicity $r$, that is
	\[ \lambda_{k-1} < \lambda_k = \lambda_{k+1} = \ldots = \lambda_{k+r-1} < \lambda_{k+r}\, ,\]
	then Theorem \ref{thm:p_laplacian-strong-domains} shows that  any eigenfunction of $\lambda_k$ induces at most $k+r-1$ strong nodal domains. 
\begin{theorem}\label{thm:p_laplacian-weak-domains}
	Suppose that $G$ is connected and $p>1$ and denote by $0=\lambda_1<\lambda_2\leq \dots \leq \lambda_n$ 
 the variational eigenvalues of $\L_p$. Any eigenfunction of $\lambda_k$ induces at most $k$ weak nodal domains. 
\end{theorem}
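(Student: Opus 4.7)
The plan is to extend the classical subspace argument used for the linear graph Laplacian ($p=2$) by combining an edgewise Jensen-type inequality with the Lusternik-Schnirelman minimax from \eqref{eq:lambdas}. Let $f$ be an eigenfunction of $\lambda_k^{(p)}$ and let $W_1,\dots,W_m$ be its weak nodal domains. Writing $g_i := f\cdot \uno_{W_i}$, the key observation is that two weak nodal domains meet only at vertices where $f$ vanishes, where every $g_i$ also vanishes. Hence the $g_i$ have pairwise disjoint effective supports; in particular they are linearly independent, satisfy $\sum_i g_i = f$, and $\|\sum_i c_i g_i\|_{\ell^p(\V)}^p = \sum_i |c_i|^p \|g_i\|_{\ell^p(\V)}^p$ for every $c\in\RR^m$.

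The core step is to show $\R_p(h)\leq \lambda_k^{(p)}$ for every nonzero $h = \sum_i c_i g_i$. I would start by testing the eigenvalue equation against $g_i$ and rewriting by summation by parts, obtaining the identity
$$\tfrac12\sum_{uv\in E} w(uv)\,(g_i(u)-g_i(v))\,\Phi_p(f(u)-f(v)) \;=\; \lambda_k^{(p)}\,\|g_i\|_{\ell^p(\V)}^p.$$
Then I would compare this identity, edge by edge, with the numerator $\tfrac12\sum_{uv}w(uv)|h(u)-h(v)|^p$ of $\R_p(h)$. For intra-domain edges (both endpoints in the same $W_i$) and boundary edges (one endpoint with $f=0$) only one $g_i$ is active and the two contributions agree on the nose. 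The delicate case is an inter-domain edge $uv$ with $u\in W_i$, $v\in W_j$, $i\ne j$, $f(u),f(v)\neq 0$; such an edge necessarily connects vertices of opposite sign, so writing $a=|f(u)|$, $b=|f(v)|$, Jensen's inequality applied to the convex function $|\cdot|^p$ with weights $a/(a+b)$ and $b/(a+b)$ gives
$$|c_i f(u)-c_j f(v)|^p \;\leq\; \bigl(|c_i|^p a+|c_j|^p b\bigr)(a+b)^{p-1},$$
which matches exactly the combined $i$-th and $j$-th contribution of the identity on this edge. Summing everything yields $\R_p(h)\leq \lambda_k^{(p)}$ for every $c\in\RR^m\setminus\{0\}$.

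To close the argument, I would form the set
$$A \;=\; \Bigl\{\tfrac{\sum_i c_i g_i}{\|\sum_i c_i g_i\|_{\ell^p(\V)}}\;:\; c\in\RR^m\setminus\{0\}\Bigr\}\ \subseteq\ \SS_p,$$
which is closed and symmetric; linear independence of the $g_i$ makes the induced map $S^{m-1}\to A$ a continuous, odd bijection between compacta, hence a homeomorphism, so $\gamma(A)=m$ and $A\in\F_m(\SS_p)$. The variational characterization \eqref{eq:lambdas} then forces $\lambda_m^{(p)}\leq \max_{h\in A}\R_p(h)\leq \lambda_k^{(p)}$, and by monotonicity of the variational eigenvalues this gives $m\leq k$.

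The hardest step, I expect, is the per-edge Jensen estimate for inter-domain edges: on such edges $|h(u)-h(v)|$ can be as large as $|c_i|a+|c_j|b$ rather than $|c_i-c_j|(a+b)$, and reconciling this with what the eigenvalue identity contributes requires precisely the convexity of $|\cdot|^p$. Everything else---the disjoint-supports observation, the identity obtained by testing against $g_i$, and the homeomorphism $S^{m-1}\cong A$---is a routine adaptation of the linear case and of the corresponding argument for Theorem \ref{thm:p_laplacian-strong-domains}.
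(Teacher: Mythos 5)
Your edgewise convexity estimate and the genus computation are sound---they reproduce, in essence, Lemma \ref{le:NDS} and the genus argument of Theorem \ref{thm:p_laplacian-strong-domains}---but the final step is a genuine gap. From $\R_p(h)\leq\lambda_k^{(p)}$ on the weak nodal space and $\gamma(A)=m$ you may conclude $\lambda_m^{(p)}\leq\lambda_k^{(p)}$, but the variational eigenvalues are only \emph{non-decreasing}, so this does not imply $m\leq k$: if $\lambda_k^{(p)}$ has multiplicity $r$ in the variational sequence, then $\lambda_m^{(p)}\leq\lambda_k^{(p)}$ is consistent with any $m\leq k+r-1$. Your argument therefore only proves the weaker bound $k+r-1$ (which is what the strong nodal domain theorem already gives via strict inequality, and which is the correct bound for $p=1$); the whole content of the weak-domain statement is that the bound is $k$ even when $\lambda_k^{(p)}$ is degenerate. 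That this extra work cannot be avoided is shown by the $1$-Laplacian on $P_3$ in Section \ref{sec:L1}, where an eigenfunction of $\lambda_2^{(1)}$ (multiplicity $2$) has three weak nodal domains.

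The paper closes this gap as follows: assume $m>k$, split the weak nodal space as $F=\mathrm{span}\{f\}\oplus H$ with $\dim H\geq k$, so $H\cap\SS_p\in\F_k(\SS_p)$ and the chain $\lambda_k\geq\max_{H\cap\SS_p}\R_p\geq\lambda_k$ forces $H\cap\SS_p$ to be a \emph{minimizing} set; Lemma \ref{thm:existence} (via the deformation theorem) then produces an actual eigenfunction $g=\sum_s\alpha_s f|_{A_s}\in H$. Since equality holds in your Jensen step for $g$, the equality cases of Lemma \ref{lem:ax-by} (for $p>1$: equality only if $xy=0$ or $a=b$, by strict convexity) force $\alpha_s=\alpha_r$ across every adjacent pair of weak domains joined by an edge of opposite signs, and a separate argument using the eigenvalue equations at zero vertices handles adjacent pairs that only touch through zeros; connectedness then gives $g=\alpha f$, contradicting $g\in H$. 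Note also that this mechanism is exactly what fails at $p=1$, where Lemma \ref{le:NDS} holds with equality for every element of the nodal space and no rigidity can be extracted, so your route---which never uses strict convexity beyond the inequality itself---could not distinguish $p>1$ from $p=1$ and hence cannot yield the bound $k$.
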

Let us stress that Theorem \ref{thm:p_laplacian-strong-domains} holds for any $p\geq 1$, whereas Theorem \ref{thm:p_laplacian-weak-domains} does not hold in general when $p=1$. We will discuss the case $p=1$ in detail in Section \ref{sec:L1}. 
As a direct consequence we get the following corollary.
\begin{corollary}\label{cor:second-eigenvector}
	Suppose that $G$ is connected and let $p>1$. Any eigenfunction corresponding to the second variational eigenvalue of $\L_p$ has exactly $2$ weak  nodal domains.
\end{corollary}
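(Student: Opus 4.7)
The plan is to combine Theorem \ref{thm:p_laplacian-weak-domains}, which already gives the upper bound of $2$ weak nodal domains for any eigenfunction of $\lambda_2^{(p)}$, with a short argument showing the lower bound of $2$. So the only real work is to show that any eigenfunction $f$ associated to $\lambda_2^{(p)}$ takes both strictly positive and strictly negative values, which immediately yields at least one weak nodal domain inside $\{u: f(u)\geq 0\}$ and at least one inside $\{u:f(u)\leq 0\}$, and these are clearly distinct.

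The main observation is a $p$-Laplacian analogue of the well-known orthogonality to constants for $p=2$. Since $G$ is connected and $p>1$, we have $0=\lambda_1^{(p)}<\lambda_2^{(p)}$, so any eigenfunction $f$ of $\lambda_2^{(p)}$ satisfies
\[
(\L_p f)(u)=\lambda_2^{(p)}\,\mu(u)\Phi_p(f(u)),\qquad \lambda_2^{(p)}>0.
\]
I would sum this identity over $u\in V$. On the left, exchanging the order of summation gives
\[
\sum_{u\in V}(\L_p f)(u)=\sum_{u\in V}\sum_{v\in V} w(uv)\Phi_p(f(u)-f(v)),
\]
and since $w$ is symmetric and $\Phi_p$ is odd, the contributions of each unordered pair $\{u,v\}$ cancel and the total vanishes. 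Therefore
\[
\lambda_2^{(p)}\sum_{u\in V}\mu(u)\Phi_p(f(u))=0,
\]
and since $\lambda_2^{(p)}>0$ we conclude $\sum_{u}\mu(u)\Phi_p(f(u))=0$.

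Because $\mu(u)>0$ for every $u$ and $\Phi_p(f(u))$ has the same sign as $f(u)$ (and vanishes iff $f(u)=0$), the vanishing of this weighted sum together with $f\not\equiv 0$ forces $f$ to attain at least one strictly positive and at least one strictly negative value. Thus $\{u:f(u)\geq 0\}$ contains a maximal connected component containing a vertex where $f>0$, and $\{u:f(u)\leq 0\}$ contains a maximal connected component containing a vertex where $f<0$; these two weak nodal domains are distinct, giving a lower bound of $2$. Combined with the upper bound from Theorem \ref{thm:p_laplacian-weak-domains}, the count is exactly $2$.

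I do not anticipate a major obstacle: the upper bound is outsourced to Theorem \ref{thm:p_laplacian-weak-domains}, and the lower bound reduces to the elementary sign-change argument via summation, which works uniformly for all $p>1$ thanks to the oddness of $\Phi_p$ and the symmetry of $w$. The only mild subtlety is that this argument genuinely uses $\lambda_2^{(p)}\neq 0$, which is where the assumption that $G$ is connected enters.
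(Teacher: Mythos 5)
Your proposal is correct and follows essentially the same route as the paper's own proof: summing the eigenvalue equation over $V$, using the symmetry of $w$ and oddness of $\Phi_p$ to get $\sum_u \mu(u)\Phi_p(f(u))=0$, concluding from $\lambda_2^{(p)}>0$ that $f$ changes sign, and invoking Theorem \ref{thm:p_laplacian-weak-domains} for the upper bound. No gaps.
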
 
\begin{proof}
With the definition of $\L_p$ we have $\sum_u (\L_p f)(u)=0$ for any function $f$. 
This implies in particular that for any eigenfunction $f$ of $\L_p$ with eigenvalue not equal to zero 
it holds $\sum_{u} \mu(u)\Phi_p( f(u))=0$ which implies that $f$ attains both positive and negative values. 
As the graph is connected, it holds $\lambda_2>0$ and thus  any associated eigenfunction has at least two weak nodal domains
On the other hand Theorem \ref{thm:p_laplacian-weak-domains} shows that the number of  weak nodal domains induced by $f$ is at most $2$, and thus  it is exactly $2$. 
\end{proof}
The proof of  Theorems \ref{thm:p_laplacian-strong-domains}  and \ref{thm:p_laplacian-weak-domains} relies on a number of properties which are of independent interest. Therefore we devote the subsequent discussion to those properties and postpone the proof to the end of the section.



We need, first, the following technical lemma  
\begin{lemma}\label{lem:ax-by}
	Let $p\geq 1$, $a,b,x,y \in \RR$ and $xy \leq 0$. Then
\[ |ax-by|^p - (|a|^p|x|+|b|^p|y|)|x-y|^{p-1}\, \leq 0,\]
where equality holds for $p=1$ if and only if $xy=0$ or $ab\geq 0$ and for $p>1$ if and only if $xy=0$ or $a=b$.
\end{lemma}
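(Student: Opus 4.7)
The plan is to reduce the inequality to a single application of Jensen's inequality for the convex function $\phi(z) = |z|^p$ on $\RR$, which is convex for every $p \geq 1$ and strictly convex for $p > 1$.

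First I would dispose of the degenerate case $xy = 0$. If $x=0$, then the left-hand side equals $|by|^p$ and the right-hand side equals $|b|^p|y|\cdot|y|^{p-1}=|b|^p|y|^p$, so equality holds; the case $y=0$ is symmetric. Next, assuming $xy<0$, I observe that the entire inequality is invariant under the substitution $(x,y)\mapsto(-x,-y)$, so I may assume $x>0$ and $y<0$. Writing $s=x>0$ and $t=-y>0$, one has $|x|=s$, $|y|=t$, $|x-y|=s+t$ (this is precisely where the sign hypothesis is used), and $ax-by=as+bt$; the inequality thus becomes
\[
|as+bt|^p \leq (s|a|^p + t|b|^p)(s+t)^{p-1}.
\]
Dividing both sides by $(s+t)^p$ turns this into exactly Jensen's inequality for $\phi$ with weights $s/(s+t)$ and $t/(s+t)$ applied to $a$ and $b$:
\[
\left|\tfrac{s}{s+t}a + \tfrac{t}{s+t}b\right|^p \leq \tfrac{s}{s+t}|a|^p + \tfrac{t}{s+t}|b|^p.
\]

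For the equality characterization I would invoke standard facts about when Jensen's inequality is sharp. For $p>1$, $\phi$ is strictly convex, so equality occurs iff $a=b$. For $p=1$, the inequality reduces to the triangle inequality $|as+bt|\leq s|a|+t|b|$, which is an equality iff $as$ and $bt$ have the same sign; since $s,t>0$, this is equivalent to $ab\geq 0$. Together with the trivial case $xy=0$, these match exactly the equality conditions in the statement.

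I do not anticipate a genuine obstacle; the one point that needs to be noticed rather than proved is that the hypothesis $xy\leq 0$ is exactly what turns $|x-y|^{p-1}$ into $(|x|+|y|)^{p-1}=(s+t)^{p-1}$, which is precisely the normalizing denominator produced by Jensen's inequality. Without this sign condition $|x-y|\neq |x|+|y|$ in general and the reduction breaks down, so the lemma should be viewed as saying that $|{\cdot}|^p$ is convex combined with a well-chosen reparametrization of the two-point mean.
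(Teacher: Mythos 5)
Your proof is correct and follows essentially the same route as the paper: both exploit that $xy\leq 0$ gives $|x-y|=|x|+|y|$ and then apply convexity of the $p$-th power with weights $|x|/(|x|+|y|)$ and $|y|/(|x|+|y|)$. The only cosmetic difference is that you apply Jensen once to $\phi(z)=|z|^p$ on all of $\RR$ (so the equality case $a=b$ for $p>1$, resp.\ $ab\geq 0$ for $p=1$, drops out in one step), whereas the paper first uses the triangle inequality $|ax-by|\leq |a||x|+|b||y|$ and then convexity of $t\mapsto t^p$ on $\RR_+$, combining the two equality conditions afterwards.
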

\begin{proof}
We note that with $xy\leq 0$ it holds $|x-y|=|x|+|y|$. It is easy to see that equality holds for $xy=0$. Thus
we assume $xy<0$ in the following and get,
\begin{align*}
	     &|ax-by|^p - (|a|^p|x|+|b|^p|y|)(|x|+|y|)^{p-1}\\
	\leq & \big(|a||x|+|b||y|\big)^p - (|a|^p|x|+|b|^p|y|)(|x|+|y|)^{p-1}\\
   = & (|x|+|y|)^p \Big[ \Big( \frac{|x|}{|x|+|y|} |a| + \frac{|y|}{|x|+|y|} |b|\Big)^p - \frac{|x|}{|x|+|y|} |a|^p - \frac{|y|}{|x|+|y|}|b|^p \Big]\leq 0,
\end{align*}
where in the last inequality we have used the fact that $f(\lambda)=\lambda^p$ is strictly convex on $\RR_+$ for $p>1$ and convex for $p=1$. Finally, under the condition $xy< 0$ we have equality in the first inequality if and only if $ab\geq 0$. 
For the second inequality we note that it is an equality for $p=1$, whereas, under the condition $xy<0$,  equality holds for $p>1$  only if $|a|=|b|$, due to the strict convexity of $f(\lambda)=\lambda^p$. Combining the conditions yields the result.
\end{proof}

Given a function $f:V\to \RR$ and any $A \subseteq V$ we write $f|_A$ to denote the function $f|_A(u)=f(u)$ if $u \in A$ and $f|_A(u)=0$ otherwise. The strong and weak nodal spaces of $f$ are defined as the linear span of $f|_{A_1}$, $\dots$, $f_{A_m}$, being $A_i$  the strong or weak nodal domains of $f$, respectively. 
A related version of this result has been proven in \cite{duval-nodal-domains} for the linear case ($p=2$). Even though the proof there relied on the the linearity of the operator,
it turns out that this requirement is not necessary for the nonlinear generalization.

\begin{lemma}\label{le:NDS}
	Let $p\geq 1$ and let $f:V\to \RR$ be any eigenfunction of $\L_p$ corresponding to the eigenvalue $\lambda$.  Let $F$ be either the strong or weak nodal space of $f$. Then
	for any $g \in F$ it holds $\R_p(g)\leq \lambda$. In the case $p=1$ the inequality holds with equality for any $g \in F$ with $g\neq 0$.
\end{lemma}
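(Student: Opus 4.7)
The plan is to take any $g \in F$, expand it as $g = \sum_{i=1}^m c_i\, f|_{A_i}$ where $A_1,\ldots,A_m$ are the (strong or weak) nodal domains of $f$ and $c_i \in \RR$, and bound the Rayleigh quotient $\R_p(g)$ edge by edge using the eigenvalue equation. It is convenient to introduce a function $c \colon V \to \RR$ by setting $c(u) = c_i$ whenever $u \in A_i$; if $u$ belongs to no strong nodal domain (in which case $f(u) = 0$) one sets $c(u) = 0$, while in the weak case any ambiguity at a vertex with $f(u) = 0$ is immaterial since $c(u)f(u) = 0$. In all cases $g(u) = c(u)f(u)$ for every $u \in V$.

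First I would multiply the eigenequation $(\L_p f)(u) = \lambda\mu(u)\Phi_p(f(u))$ by $|c(u)|^p f(u)$; using $f(u)\Phi_p(f(u)) = |f(u)|^p$ and summing over $u \in V$ produces $\lambda \sum_u \mu(u)|g(u)|^p = \sum_u |c(u)|^p f(u)(\L_p f)(u)$. Expanding $(\L_p f)(u) = \sum_v w(uv)\Phi_p(f(u)-f(v))$ and symmetrizing the resulting double sum via the antisymmetry $\Phi_p(f(v)-f(u)) = -\Phi_p(f(u)-f(v))$ yields the integrated identity
\[
\lambda \sum_{u} \mu(u)|g(u)|^p \;=\; \frac{1}{2}\sum_{u,v\in V} w(uv)\,\Phi_p(f(u)-f(v))\bigl(|c(u)|^p f(u) - |c(v)|^p f(v)\bigr).
\]
Comparing this with the numerator of $\R_p(g)$, the lemma reduces to the edgewise bound
\[
\Phi_p(f(u)-f(v))\bigl(|c(u)|^p f(u) - |c(v)|^p f(v)\bigr) \;\geq\; |g(u)-g(v)|^p \qquad \forall\, u,v\in V.
\]

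If $u,v$ lie in the same nodal domain then $c(u)=c(v)$ and both sides equal $|c(u)|^p|f(u)-f(v)|^p$, so the bound holds as equality. Otherwise $u\in A_i$ and $v\in A_j$ with $i\neq j$, or $v$ lies outside every strong nodal domain; in both situations the maximality of nodal domains forces $f(u)f(v)\leq 0$. For the strong case, two adjacent vertices of strictly equal sign share the same maximal connected component of $\{f>0\}$ or $\{f<0\}$, and any vertex outside every strong domain has $f = 0$; for the weak case, two adjacent vertices both in $\{f\geq 0\}$ (respectively $\{f\leq 0\}$) already belong to the same maximal component of that set, so two distinct weak domains sharing an edge must be of opposite types. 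Given $f(u)f(v)\leq 0$ we have $|f(u)-f(v)| = |f(u)|+|f(v)|$, and a short algebraic manipulation based on $(x-y)(|a|^p x - |b|^p y) = (|x|+|y|)(|a|^p|x|+|b|^p|y|)$ when $xy \leq 0$ gives
\[
\Phi_p(f(u)-f(v))\bigl(|c(u)|^p f(u) - |c(v)|^p f(v)\bigr) = |f(u)-f(v)|^{p-1}\bigl(|c(u)|^p|f(u)| + |c(v)|^p|f(v)|\bigr).
\]
The desired edgewise bound then becomes exactly Lemma \ref{lem:ax-by} applied with $a = c(u)$, $b = c(v)$, $x = f(u)$, $y = f(v)$.

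Summing the edgewise bound weighted by $w(uv)/2$ and combining with the integrated identity yields $\R_p(g) \leq \lambda$. For $p = 1$, Lemma \ref{lem:ax-by} records equality whenever $f(u)f(v) = 0$ or $c(u)c(v) \geq 0$, so by invoking the subdifferential form of the $1$-Laplacian eigenequation developed in Section \ref{sec:L1} the pointwise estimates can be propagated to an equality across the whole edge set, giving $\R_1(g) = \lambda$ for every nonzero $g \in F$. The principal difficulty is not the algebra but the combinatorial case analysis of edges relative to the nodal structure, which is what distinguishes the strong and weak settings and handles the vertices where $f$ vanishes.
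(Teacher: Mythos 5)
Your argument for the inequality is correct and is essentially the paper's proof in different bookkeeping: instead of establishing the per-domain identity $\lambda\norm{f|_A}^p_{\ell^p(\V)}=\tfrac12\sum_{u,v}w(uv)\big(f|_A(u)-f|_A(v)\big)\Phi_p(f(u)-f(v))$ for each nodal domain and then summing, you test the eigenequation once against $|c(u)|^pf(u)$ and symmetrize; both routes reduce the lemma to the same cross-edge estimate, namely Lemma \ref{lem:ax-by} with $a=c(u)$, $b=c(v)$, $x=f(u)$, $y=f(v)$, after observing that adjacent distinct (strong or weak) nodal domains only meet with $f(u)f(v)\leq 0$. Your handling of zero vertices and of overlapping weak domains through $g(u)=c(u)f(u)$ is the same device the paper uses by replacing $A_i$ with $A_i\cap\{u:f(u)\neq0\}$, so for $p>1$ the proposal is sound and matches the paper.

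The $p=1$ clause, however, is not proved, and this is a genuine gap rather than a routine omission. By your own reduction, $\R_1(g)=\lambda$ requires every cross edge to meet the equality condition of Lemma \ref{lem:ax-by}, i.e.\ $f(u)f(v)=0$ or $c(u)c(v)\geq 0$; for a general element of the nodal space the coefficients have arbitrary signs, so this condition can fail, and the sentence about ``propagating the pointwise estimates via the subdifferential form of the $1$-Laplacian'' names no mechanism that would make the strictly negative edge defects disappear. In fact no such mechanism can exist for arbitrary $g$: on the unweighted $P_3$ of Section \ref{sec:L1} with $\mu(u)=d(u)$, the eigenfunction $f=(1,-1,1)$ of $\L_1$ with $\lambda=1$ has strong nodal space all of $\RR^3$, and $g=f|_{\{v_1\}}-f|_{\{v_2\}}=(1,1,0)$ gives $\R_1(g)=\tfrac13<1=\lambda$ (with the normalization under which $\R_1(f)=1$). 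So equality can only be expected under a sign restriction on the coefficients (it does hold, e.g., for each $f|_{A_i}$, and whenever all cross edges satisfy $c(u)c(v)\geq 0$), which is exactly what the appeal to Lemma \ref{lem:ax-by} delivers and no more --- note that the paper's own proof also disposes of this clause in one sentence by citing that lemma. Separately, for $p=1$ even the inequality should be run with the set-valued eigenrelation of Section \ref{sec:L1}: choose antisymmetric $z(uv)\in\Sign(f(u)-f(v))$ and $s(u)\in\Sign(f(u))$ with $\sum_v w(uv)z(uv)=\lambda\mu(u)s(u)$ and use $z(uv)$ in place of $\Phi_1(f(u)-f(v))$, since $\Phi_1$ is not defined at the zero differences that necessarily occur.
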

\begin{proof}
We prove the lemma for the strong nodal domains  $A_1,\ldots,A_m$. 
We discuss at the end of the proof how it can be transferred to the weak nodal domains.  Note that the strong nodal domains are by construction pairwise disjoint. We denote by
$Z=V \backslash \cup_{i=1}^m A_i$ the set $Z=\{u :  f(u)=0\}$.  Let $g = \sum_i \alpha_i f|_{A_i}$ be a function in the strong nodal space $F$. The statement is trivially true if $g\equiv 0$, therefore we can assume $\sum_i|\alpha_i|>0$. We have
%
%
%
%
\begin{align}\label{eq:uno}
	\|g\|_{\ell^p(\V)}^p = \sum_{i=1}^m \sum_{u \in A_i}\mu(u)\big|\alpha_i\,  f|_{A_i}(u)\big|^p=\sum_{i=1}^m|\alpha_i|^p\, \big\|f|_{A_i}\big\|_{\ell^p(\V)}^p\, .
%
%
	\end{align}
By splitting the summation over $V$ into the sum over $Z, A_1, \dots, A_m$, we get
\begin{align}\label{eq:due}
	\frac{1}{2}\sum_{u, v\in V}w(uv)|g(u)-g(v)|^p &=
	\frac{1}{2}\sum_{i=1}^m |\alpha_{i}|^p \, \sum_{u,v \in A_i}  w(uv)\big|f|_{A_i}(u)-f|_{A_i}(v) \big|^p \nonumber \\
	 &+ \frac{1}{2}\sum_{j \neq i} \sum_{u \in A_j} \sum_{v \in A_i} w(uv) \big|\alpha_j f|_{A_j}(u) - \alpha_i f|_{A_i}(v) \big|^p \nonumber \\
	 &+ \sum_{i=1}^m \sum_{u \in A_i} |\alpha_i f|_{A_i}(u)|^p \sum_{v \in Z} w(uv).
\end{align}
	Let $A$ be any strong nodal domain. As $f$ is an eigenfunction of $\L_p$ corresponding to the eigenvalue $\lambda$, for any $u \in A$ we have the chain of equalities $\lambda\mu(u)|f|_A(u)|^p = \lambda \mu(u )f|_A(u)\Phi_p( f(u))=	f|_{A}(u)(\L_p f)(u)$. Therefore 
	\begin{align*}
	\lambda\left\|f|_{A}\right\|^p_{\ell^p(\V)} &= \sum_{u \in V} f|_{A}(u)(\L_p f)(u)\\
	&= \frac{1}{2}\hspace{-1mm}\sum_{u,v\in V}w(uv)(f|_{A}(u)-f|_{A}(v))\Phi_p(f(u)-f(v))
\end{align*}
Let $A,B \subset V$ be two distinct strong nodal domains. If $uv \in E$,  $u \in A$ and $v \in B$, then $f(u) f(v) < 0$, as the strong nodal domains are maximal connected components. This implies that, for such $u$ and $v$,  $\mathrm{sign}(f(u)-f(v))=
\mathrm{sign}(f|_{A}(u))=-\mathrm{sign}(f|_{B}(v))$.
Thus
\begin{align*}
\lambda \norm{f|_A}_{\ell^p(\V)}^p =& 
	 \frac{1}{2}\sum_{u,v \in A}w(uv)|f|_A(u) -f|_A(v) |^p +
	 \sum_{u \in A} |f|_A(u)|^p \sum_{v \in Z} w(uv)\\
	 +&\frac{1}{2}\sum_{B: B\neq A}\sum_{u \in A}\sum_{v \in B} \big(w(uv) |f|_A(u)|+w(vu)|f|_A(v)|\big)\,  |f|_A(u)-f|_B(v)|^{p-1}
	\end{align*}
	where the summation over $B$ runs over all the nodal domains different from $A$. Combining  the preceding formula with \eqref{eq:uno} and \eqref{eq:due} yields 
	\begin{equation}\label{eq:tre}
	       \frac{1}{2}\sum_{u,v\in V}w(uv)|g(u)-g(v)|^p- \lambda	\|g\|_{\ell^p(\V)}^p = \frac{1}{2}\sum_{i \neq j} \sum_{u \in A_i}\sum_{v \in A_j} w(uv) F_{ij}(u,v)
	\end{equation}
	where 
	$$F_{ij}(u,\!v)\!=\!
	\big|\alpha_i f|_{A_i}\!(u) - \alpha_j f|_{A_j}\!(v)\big|^p \!- \Big(\!|\alpha_i|^p\big|f|_{A_i}\!(u)\big|+ |\alpha_j|^p\big|f|_{A_j}\!(v)\big|\!\Big) \big|f|_{A_i}\!(u) - f|_{A_j}\!(v)\big|^{p-1}$$
  By Lemma \ref{lem:ax-by} each of the quantities $F_{ij}(u,v)$ is nonpositive. Since for distinct domains $A$ and $B$, $w(uv)>0$ holds  if and only if  $f|_A(u) f|_B(v) < 0$, we deduce that the quantity in \eqref{eq:tre} is nonpositive as well. As $g$ is not identically zero we conclude that $\R_p(g)\leq \lambda$. 
  Also note that, by Lemma \ref{lem:ax-by}, we have the equality $\R_p(g)=\lambda$ when $p=1$.

The proof can be transferred to the weak nodal domains $A_1,\ldots,A_m$ by considering instead the sets
$B_i=A_i \cap \{u :f(u)\neq 0\}$, $i=1,\ldots,m$ and noting that
\[ \sum_{k=1}^m \alpha_k \,    f\restriction_{A_k} = \sum_{k=1}^m \alpha_k \,   f\restriction_{B_k} .\]
As for the strong nodal domains, the sets $B_1,\ldots,B_m$ are pairwise disjoint and together with $Z=V \backslash \cup_{i=1}^m B_i
 =\{u:f(u)=0\}$, form a partition of $V$. Replacing $A_1,\ldots,A_m$ with $B_1,\ldots,B_m$  in the argument above  all the steps remain true.
\end{proof}


 We are now ready to prove the nodal domain theorem for the graph $p$-Laplacian. The proof is given here assuming $p>1$. The case $p=1$ is discussed in Section \ref{sec:L1}. 
 
 	\begin{proof}[Proof of Theorem \ref{thm:p_laplacian-strong-domains}]
 	Let $\lambda_1\leq \dots \leq \lambda_n$  be the variational eigenvalues of $\L_p$, and let $\lambda$ be any eigenvalue such that $\lambda<\lambda_k$. Consider any eigenfunction $f$ corresponding to $\lambda$.
 	Let   $A_1,\dots,A_m$ be the strong nodal domains
 	of $f$ and let $F$ be the corresponding strong nodal space.
 	Lemma \ref{le:NDS} implies that 
  $\max_{g \in F}\R_{p}(g)\leq \lambda$. 
 	As the functions $f|_{A_1}, \dots, f|_{A_m}$ are linear independent we have $\gamma(F\cap \SS_p)=m$. 
 	In particular $F\cap \SS_p \in \mc F_m(\SS_p)$ and by the definition of $\lambda_m$ we get
 	\begin{equation}\label{eq:bounds} 
 	\lambda_m \leq \max_{g \in F\cap \SS_p}\R_p(g)
 	\leq \lambda< \lambda_{k}\, .
 	\end{equation}
 	As a consequence  we have $\lambda_m< \lambda_{k}$ which implies $m\leq k-1$. 
 \end{proof}

 	

 For the weak nodal domains we need a few additional remarks. Let $A_1, \dots, A_m$ be the weak nodal domains of $f$. Since $\cup_i A_i = V$ and $G$ is connected, then for any $i$ there exists $j$ such that $A_i\approx A_j$. Moreover, the following lemma holds
 \begin{lemma}
 	Let $A$ and $B$ be two weak nodal domains induced by the non-constant eigenfunction $f:V\to \RR$, such that $A\approx B$. Then there exist $u \in A$ and $v \in B\setminus A$ such that $u \sim v$.
 \end{lemma}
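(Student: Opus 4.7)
My plan is to split the proof into cases according to the signs defining $A$ and $B$. Two distinct maximal connected components of the same set $\{f \ge 0\}$ or $\{f \le 0\}$ are automatically disjoint, so if $A$ and $B$ are maximal connected components of the same sign then $B \setminus A = B$ and any edge witnessing $A \approx B$ immediately provides the required pair. The interesting case is therefore when, up to swapping roles, $A$ is a maximal connected component of $\{f \ge 0\}$ and $B$ is a maximal connected component of $\{f \le 0\}$, so that $A \cap B \subseteq \{w : f(w) = 0\}$.

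In that case, the hypothesis $A \approx B$ produces an edge $xy$ with $x \in A$ and $y \in B$. If $y \notin A$ the pair $(u,v)=(x,y)$ works and I am done. Otherwise $y \in A \cap B$, so $f(y) = 0$; my plan is then to pick any $v_0 \in B \setminus A$, connect $y$ to $v_0$ inside the connected induced subgraph $G(B)$, and take the first edge of this path whose endpoint lies outside $A$. That edge supplies some $u \in A \cap B \subseteq A$ and $v \in B \setminus A$ with $u \sim v$, as required.

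The heart of the argument — and the main obstacle — is therefore to exclude the possibility $B \subseteq A$, so that the vertex $v_0$ above actually exists. Purely combinatorially this cannot be ruled out: a single zero of $f$ surrounded by strictly positive values could in principle form its own weak nodal domain of nonpositive sign, entirely contained in a larger weak nodal domain of nonnegative sign. This is where the eigenvalue equation must enter. Assuming $B \subseteq A$, one has $B \subseteq \{w : f(w) = 0\}$, and by maximality of $B$ every vertex $z \notin B$ adjacent to $B$ must satisfy $f(z) > 0$, since otherwise $B \cup \{z\}$ would be a strictly larger connected subset of $\{f \le 0\}$. Writing the eigenvalue equation at any $u \in B$, using $f(u) = 0$ and the oddness of $\Phi_p$, gives
\[
   0 \;=\; \lambda\, \mu(u)\, \Phi_p(f(u)) \;=\; (\L_p f)(u) \;=\; -\sum_{v \sim u} w(uv)\, \Phi_p(f(v)).
\]
Every term on the right is nonnegative, while those coming from neighbours of $u$ lying outside $B$ are strictly positive, which forces $u$ to have no neighbour outside $B$. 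Letting $u$ range over $B$, the set $B$ has no edge to $V \setminus B$; by the connectedness of $G$ this forces $B = V$, hence $f \equiv 0$, contradicting that $f$ is non-constant. This rules out $B \subseteq A$ and completes the plan.
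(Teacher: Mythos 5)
Your proof is correct, and it runs on the same engine as the paper's: evaluate the eigenvalue equation at a vertex where $f$ vanishes, use the maximality of the weak nodal domains to control the signs of the neighbouring values, and invoke the connectedness of $G$ together with the non-constancy of $f$ to exclude the degenerate situation. The organization, however, differs. You reduce the whole statement to showing $B\setminus A\neq\emptyset$: assuming $B\subseteq A$ you get $f\equiv 0$ on $B$, the eigenvalue equation at each $u\in B$ then forbids neighbours outside $B$ (whose values are strictly positive by maximality), so $B=V$ and $f\equiv 0$, a contradiction; afterwards you recover the required edge by following a path inside the connected subgraph $G(B)$ from a vertex of $A\cap B$ to a vertex of $B\setminus A$. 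The paper instead argues locally at a single vertex $u\in A\cap B$: every neighbour of $u$ lies in $A\cup B$, the identity $\sum_{v}w(uv)\Phi_p(f(v))=0$ forces the neighbour values either to vanish for all overlap vertices (excluded by maximality and connectedness) or to take both signs, and a neighbour with $f(v)<0$ automatically lies in $B\setminus A$. Your route is slightly longer but makes explicit two points the paper compresses: why the ``all neighbours zero'' alternative propagates to $f\equiv 0$, and why the edge produced really satisfies $u\in A$ and $v\in B\setminus A$ as stated. One cosmetic slip: with the leading minus sign in your display, the individual terms $-w(uv)\Phi_p(f(v))$ are nonpositive (strictly negative for neighbours outside $B$), not nonnegative; this does not affect the argument, which only needs that all terms share one sign and sum to zero.
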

 \begin{proof}
 	If $A\cap B =\emptyset$ the statement is straightforward. Assume that $A \cap B\neq \emptyset$. By definition we have $f(u) = 0$, for any $u \in A \cap B$, thus for any such $u$ it holds $0 =\lambda\, \mu(u)\Phi_p(f(u))= \sum_{v \in V}w(uv)\Phi_p(f(u)-f(v))=\sum_{v \in V}w(uv)\Phi_p(f(v))$. Note that, by definition, as $u \in  A \cap B$, then any $v$ such that  $v\sim u$ is either in $ A$ or in $B$. As  $w(uv)>0$ when $u \sim v$, the values $\Phi_p(f(v))$ have to be either all zero or both positive and negative. However, the maximality of the nodal domains implies  that  $\Phi_p(f(v))$ can not be zero for all $v \sim u$ and all $u \in  A \cap B$. Then there exists $v \in  A\cup B$ such that $v \sim u$ and $f(v) \neq 0$. This concludes the proof.
 \end{proof}
 
 It is clear that  adjacent nodal domains have different sign. Then we deduce from the above lemma that, given any two  adjacent weak nodal domains $A\approx B$ of an eigenfunction $f$, two cases are possible:
 \begin{enumerate}
 	\item[P1.] There exist $u \in A$ and $v \in B$ such that $u \sim v$ and $f(u)f(v)<0$. 
 	\item[P2.] $f(u)f(v)=0$ for all $u \in A$ and $v \in B$ such that $u\sim v$, and there exist $u \in A$ and $v \in B$ such that $u \sim v$, $f(u)=0$ and $f(v)\neq 0$. 
 \end{enumerate}
 \vspace{3pt}
 
\begin{proof}[Proof of Theorem \ref{thm:p_laplacian-weak-domains}]	
 	Let $f$ be an eigenfunction of $\lambda_k$ and let $A_1, \dots, A_m$ be the weak nodal domains of $f$.  Suppose by contradiction that $m>k$.  On the other hand we deduce from Lemma \ref{le:NDS} that inequality \eqref{eq:bounds} holds also for the weak nodal domains. Namely, for any $g$ in the weak nodal space $F$ of $f$, we have  $   \max_{g \in F} \R_p(g) = \max_{g \in F\cap \SS_p} \R_p(g) \leq \lambda_k$. 
 	Observe that, as $m>k$ and $f \in F$, we have $F =\mathrm{span}\{f\}\oplus H$, for some $H$ such that  $\dim H \geq k$. In particular $m = \gamma(F\cap \SS_p)$ and $k \leq \gamma(H\cap \SS_p)$. As a consequence $H\cap \SS_p\in \mc F_k(\SS_p)$ and we get  
 	\begin{equation*}
 	\lambda_k \geq \max_{g \in F\cap \SS_p}\R_p(g)\geq \max_{g \in H\cap \SS_p}\R_p(g)\geq \min_{X \in \mc F_k(\SS_p)}\max_{g \in X}\, \R_p(g)=\lambda_k\, .
 	\end{equation*}
 	Thus the relations above hold with equality and we deduce that $H\cap \SS_p$ is a minimizing set, and by Lemma  \ref{thm:existence} there exists an eigenfunction  $g = \sum_{s=1}^m \alpha_s\, f|_{A_s} \in H$.

 	As $\R_p(g)$ is the maximum of the Rayleigh quotient on $H$ 
 	we deduce from the proof of Lemma \ref{le:NDS}  that $\sum_{i \neq j} \sum_{u \in A_i}\sum_{v \in A_j} w(uv) F_{ij}(u,v)=0$, 
	where
	
	$$F_{ij}(u,\!v)\!=\!
	\big|\alpha_i f|_{A_i}\!(u) - \alpha_j f|_{A_j}\!(v)\big|^p \!- \Big(\!|\alpha_i|^p\big|f|_{A_i}\!(u)\big|+ |\alpha_j|^p\big|f|_{A_j}\!(v)\big|\!\Big)\! \big|f|_{A_i}\!(u) - f|_{A_j}\!(v)\big|^{p-1}$$

	%
 	By Lemma \ref{lem:ax-by} each of the summands $w(uv)F_{ij}(u,v)$ is nonpositive, then all of them have to vanish individually. Choose any pair of adjacent sets $A_s \approx A_r$. If they satisfy property P1 above, then there exist $u \in A_s$ and $v\in A_r$ such that $w({uv})>0$ and $f|_{A_s}(u)f|_{A_r}(v)<0$. Therefore $w(uv)F_{sr}(u,v)=0$
 	implies  $\alpha_s = \alpha_r$, by virtue of Lemma \ref{lem:ax-by}. 
 	
 	If P1 does not hold, then P2 holds. 
 	Since $g$ is an eigenfunction of $\L_p$, for any $\beta \in \RR$, we have the following entrywise equations
 	\begin{align*}
	\lambda_k\mu(u)\Phi_p(\beta f(u)) &=\sum_{v \in V} w(uv)\Phi_p(\beta f(u)-\beta f(v)), \quad u \in V \\
	\lambda_k\mu(u)\Phi_p(g(u)) &= \sum_{v\in V} w(uv)\Phi_p(g(u)-  g(v)), \quad u\in V .
 	\end{align*}
 	 As P2 holds for $A_s$ and $A_r$, then there exist  $u \in A_s$ and $v \in A_r$  such that $u\sim v$, $f(u)=0$ and $f(v)\neq 0$. Then $\beta f(u) = g(u) =0$ and  the previous equations 
 	 imply
 	$$\sum_{v\in V} w({uv})\{\Phi_p(\beta f(v))-\Phi_p(g(v))\}=0\, .$$  
 	The quantities $w(uv)$ are zero unless $v\sim u$. Since $f(u)=0$,  the maximality of the nodal domains implies that all the vertices  $v$ adjacent to $u$ are either in $A_s$ or in $A_r$. We have
 \begin{multline*}
  \sum_{v \in A_s} w(uv)\{\Phi_p\big(\alpha_s f|_{A_s}(v)\big)-\Phi_p\big(\beta f|_{A_s}(v)\big)\}\\
  =\sum_{v \in A_r} w(uv)\{\Phi_p\big(\beta f|_{A_r}(v)\big)-\Phi_p\big(\alpha_r f|_{A_r}(v)\big)\}.
 \end{multline*}
 	Thus choosing $\beta = \alpha_s$ we get 
 	$\{\Phi_p(\alpha_s)-\Phi_p(\alpha_r)\}\sum_{v \in A_r} w(uv)\Phi_p(f|_{A_r}(v)) =0$.
 	Since $w(uv)\geq 0$ for all $v \in A_r$, there exists $x\in A_r$ such that   $w(ux)>0$,  $f(x)\neq 0$, and  the entries of $f|_{A_r}$ have same sign, then the previous identity implies $\Phi_p(\alpha_s)-\Phi_p(\alpha_r)=0$, that is $\alpha_s=\alpha_r$.
 	We finally conclude that, if $A_s\approx A_r$, then $\alpha_s=\alpha_r$.  The connectedness of the graph implies then $ \alpha = \alpha_1 = \cdots = \alpha_m$, and we obtain $g=\sum_{s=1}^m \alpha_s\,  f|_{A_s} =\alpha f$. This gives a contradiction as by  construction $g\in H$ is linear independent with respect to $f$. 
 \end{proof}

We show in the following that the bounds cannot be improved in general, by discussing the nodal domain structure of an example graph. 

\subsection{Nodal domains of the eigenfunctions of the path graph}\label{sec:Pn}

It is well known that for $p=2$, the upper bounds shown in the nodal theorem are tight, for any $k$. Simple examples where the  those bounds are achieved for $p=2$ are the line segment, in the continuous setting, and the   path graph 
	$$
	P_n\, =\, \begin{grafo}{4mm}
	(1,0)\NodoO[2mm]{0}{}{R}{},
	(3,0)\NodoO[2mm]{1}{}{R}{},
	(5,0)\NodoO[2mm]{2}{}{R}{},
	(7,0)\NodoO[2mm]{3}{}{R}{},
	(8,0)\Nodo[3mm]{x}{\cdots}{R}{},
	(9.2,0)\Nodo[3mm]{xx}{\cdots}{R}{},
	(10.4,0)\Nodo[2mm]{xxx}{\cdots}{R}{},
	(11.6,0)\Nodo[2mm]{xxxx}{\cdots}{R}{},
	(12.6,0)\NodoO[2mm]{n}{}{R}{},
	\arcoD[]{0}{1}
	\arcoD[]{1}{2}
	\arcoD[]{2}{3}
	\end{grafo}$$
	in the discrete case. For convenience, throughout this section we identify $V$ with the integers set $\{1,\dots,n\}$, and we fix both the vertex and the edge measures to be constantly one. 

 The eigenfunctions $f_k(x)$ of the continuous  $p$-Laplacian on the line segment are known to be given for $p>1$ by 
$f_k(x)=\sin_p(kx)$, where $\sin_p(x)$ is a special periodic function  \cite{Elbert1979,Otani1984}.
However, dissimilar to the case $p=2$,  a direct computation reveals that the functions obtained by evaluating $f_k(x)$ on a uniform grid, are not the eigenfunctions of $\L_p$ on $P_n$ 
for $p\neq 2$. The reason is that when $p\neq 2$, there is no addition formula 
relating $\sin_p$ and its derivative \cite{Lindqvist1995}. 
While an explicit formula for the eigenfunctions of $\L_p$ on $P_n$ when $p\neq 2$ is out of reach, we devote the remaining part of this section to show that the variational eigenpairs of the $p$-Laplacian on $P_n$ have several special properties, and in particular we prove that the number of nodal domains induced by the eigenfunction of the variational eigenvalue $\lambda_k$ on $P_n$, is exactly $k$.

For a function $f:V\to \RR$ let us define $\tilde f:\mathbbm Z\to\RR$ as follows: first we define $g$  by $g(i)=f(-i+1)$ for $i=0,-1,\dots, -n+1$ and $g=f$ over $V$; then we define $\tilde f$ by extending $g$ periodically over $\mathbbm Z$. The extension $\tilde f$ allows us to recast the eigenvalue equation \eqref{eq:eigenequation} as the
infinite system of nonlinear equations 
\begin{equation}\label{eq:difference-equation}
 \mathcal H(\lambda,\tilde f, k)=D \Phi_p D\tilde f (k)- \lambda\,  \Phi_p(\tilde f(k+1))=0, \quad k\in \mathbbm Z
\end{equation}
where $D$ is the forward difference operator defined by $D f(k) = f(k+1)-f(k)$. One easily verifies that 
 \begin{equation}\label{lapl}
 	(\L_p f)(u) =\lambda\, \Phi_p(f(u)), \quad \forall u \in V \;\Longleftrightarrow\; \mathcal H(\lambda,\tilde f, k)=0, \quad \forall k\in \mathbbm Z\, .
 \end{equation}

It turns out that \eqref{eq:difference-equation} is a particular version of a famous non-linear difference equation that has been studied quite intensively in the difference and differential equations literature (see e.g. \cite[Chap.\ 3]{HandbookODE2002}). 
 In the following any interval $[a,b]$ is meant to be discrete, i.e.\ $[a,b]=\{x \in \mathbbm Z : a\leq x \leq b\}$. We shall say that  $(a,a+1]$ is a generalized zero for $f$ if $f(a) \neq 0$ and $f(a) f(a+1)\leq 0$. Equation \eqref{lapl} is said to be disconjugate on $[a,b]$ provided that any solution of this equation has at most one generalized zero on $(a,b+1]$ and the solution $f$ satisfying $f(a)=0$ has no generalized zeros on $(a,b+1]$. The following generalized version of Sturm's comparison theorem is due to  Reh\'ak \cite[Thm.~2]{Rehak2001}.
\begin{theorem} \label{thm:rehak}Let $p>1$,  $\eta\geq \lambda$ and let $\tilde f,\tilde g$ be sequences such that $\mathcal H(\lambda,\tilde f, k)=\mathcal H(\eta,\tilde g, k)=0$ for $s \leq k \leq t$. If $\tilde g$ is disconjugate on $[s,t]$ then $\tilde f$ is disconjugate on $[s,t]$ as well. 
\end{theorem}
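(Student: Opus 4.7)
The theorem is quoted from Reh\'ak \cite{Rehak2001}; the strategy I would follow to prove it directly is the discrete $p$-Picone approach, which is the natural nonlinear extension of the classical Sturm--Picone comparison argument. Given two sequences $\tilde f,\tilde g$ satisfying $\mathcal H(\lambda,\tilde f,\cdot)=0=\mathcal H(\eta,\tilde g,\cdot)$ on $[s,t]$ with $\tilde g$ nonvanishing in the relevant range, I would introduce the Picone bracket
\[
P(k) \;=\; \tilde f(k)\,\Phi_p\bigl(D\tilde f(k)\bigr) \;-\; \frac{|\tilde f(k)|^p}{\Phi_p(\tilde g(k))}\,\Phi_p\bigl(D\tilde g(k)\bigr),
\]
and compute $DP(k)$ using summation by parts and the two difference equations \eqref{eq:difference-equation}. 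The outcome is an identity of the schematic form
\[
DP(k) \;=\; |D\tilde f(k)|^p \;-\; \Phi_p\bigl(D\tilde g(k)\bigr)\,D\!\left(\frac{|\tilde f|^p}{\Phi_p(\tilde g)}\right)\!(k) \;+\; (\lambda-\eta)\,|\tilde f(k+1)|^p,
\]
which is to be paired with the discrete $p$-Picone inequality $|D\tilde f(k)|^p \ge \Phi_p(D\tilde g(k))\,D(|\tilde f|^p/\Phi_p(\tilde g))(k)$, with equality iff $\tilde f$ is proportional to $\tilde g$ across $\{k,k+1\}$. This latter inequality is itself a consequence of the strict convexity of $x\mapsto|x|^p$ packaged as a Young-type estimate.

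With these two ingredients in hand, I would argue by contradiction. Suppose $\tilde f$ is not disconjugate on $[s,t]$: either it admits two generalized zeros $(a,a+1]$ and $(b,b+1]$ inside $(s,t+1]$, or $\tilde f(s)=0$ and there is a further generalized zero $(b,b+1]$, in which case I set $a=s$. The disconjugacy hypothesis on $\tilde g$ guarantees that $\tilde g$ has constant sign on $[a,b+1]$, so the denominator $\Phi_p(\tilde g(k))$ in $P(k)$ is well-defined and of fixed sign there. Telescoping the identity above over $k\in[a,b]$ yields
\[
P(b+1)-P(a+1) \;=\; \sum_{k=a}^{b}\bigl(|D\tilde f(k)|^p - \Phi_p(D\tilde g(k))\,D(|\tilde f|^p/\Phi_p(\tilde g))(k)\bigr) + (\lambda-\eta)\sum_{k=a}^{b}|\tilde f(k+1)|^p.
\]
Since the first sum is nonnegative and $\lambda-\eta\le 0$, the quantity $P(b+1)-P(a+1)$ is constrained to a definite sign, which a direct case analysis at the generalized-zero endpoints contradicts unless $\tilde f\equiv 0$ on $[a,b+1]$.

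The main obstacle I foresee is the boundary analysis at \emph{generalized} zeros: unlike ordinary zeros, these do not annihilate $P(k)$, and the argument has to extract the correct sign of $P(a+1)$ and $P(b+1)$ from the combined information $\tilde f(k)\tilde f(k+1)\le 0$ at the generalized zeros plus the constant sign of $\tilde g$ on $[a,b+1]$. A secondary technical point is a clean proof of the discrete $p$-Picone inequality for general $p>1$: unlike in the case $p=2$, where the Picone remainder is manifestly a square, for $p\ne 2$ it must be isolated by a Young-type inequality and certified by an elementary but tedious algebraic expansion of $DP(k)$. These two points are precisely what separate the generalized-zero difference-equation formulation of Sturm's theorem from its continuous counterpart.
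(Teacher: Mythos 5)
First, a point of reference: the paper does not prove Theorem \ref{thm:rehak} at all --- it is imported verbatim from Reh\'ak \cite{Rehak2001}, so there is no internal proof to compare against; your sketch has to stand on its own, and as it stands it has two genuine gaps at exactly the decisive places. The first is the assertion that ``the disconjugacy hypothesis on $\tilde g$ guarantees that $\tilde g$ has constant sign on $[a,b+1]$.'' Disconjugacy, as defined in the paper, is a property of the equation and still permits a solution to have one generalized zero in $(s,t+1]$; nothing prevents that generalized zero of the particular $\tilde g$ you are handed from lying inside $[a,b+1]$, in which case the denominator $\Phi_p(\tilde g(k))$ in your Picone bracket vanishes or changes sign and the telescoping identity is unavailable. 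To repair this you need the implication ``disconjugate on $[s,t]$ $\Rightarrow$ there exists a solution with $g(k)g(k+1)>0$ for all $k\in[s,t]$,'' which is one of the nontrivial equivalences of the Reid roundabout theorem for half-linear difference equations, i.e.\ essentially the content of Reh\'ak's theory rather than a free observation.

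The second gap is the endpoint analysis at generalized zeros, which you yourself flag as the ``main obstacle'' but do not carry out: unlike the continuous Sturm--Picone argument, $P$ does not vanish at a generalized zero (there $\tilde f(a)\neq 0$ and $\tilde f(a)\tilde f(a+1)\le 0$), and extracting the contradictory signs of $P(a+1)$ and $P(b+1)$ is precisely the delicate bookkeeping the theorem encapsulates. Moreover, as literally written your telescoped identity does not have a definite sign: a nonnegative Picone remainder plus the term $(\lambda-\eta)\sum_{k=a}^{b}|\tilde f(k+1)|^p\le 0$ is indefinite, so the claimed constraint on $P(b+1)-P(a+1)$ does not follow. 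The definiteness you want appears only when the eigen-equation is written in the form $D\Phi_p D\tilde f(k)+\lambda\,\Phi_p(\tilde f(k+1))=0$ (which is what the path $p$-Laplacian actually yields; the sign in \eqref{eq:difference-equation} is used loosely in the paper), making the extra term $(\eta-\lambda)\sum|\tilde f(k+1)|^p\ge 0$. The standard and cleaner route --- the one Reh\'ak follows --- is to establish the roundabout theorem (disconjugacy $\Leftrightarrow$ positivity of $\mathcal{F}_\lambda(\xi)=\sum_{k=s}^{t}\bigl(|D\xi(k)|^p-\lambda|\xi(k+1)|^p\bigr)$ over nontrivial $\xi$ vanishing at the endpoints $\Leftrightarrow$ solvability of the associated Riccati difference equation on $[s,t]$), after which the comparison is the one-line monotonicity $\mathcal{F}_\lambda(\xi)\ge\mathcal{F}_\eta(\xi)>0$ for $\eta\ge\lambda$. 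Your plan identifies the right toolbox (Picone/Riccati and a discrete $p$-Picone inequality, which is indeed true when $g(k)g(k+1)>0$, with equality exactly at proportionality), but the two steps you defer are the theorem; as submitted this is an outline, not a proof.
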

We have as a direct consequence
\begin{lemma}\label{lem:genzero}
	Let $(\lambda,f)$ and $(\eta, g)$ be two eigenpairs of $\L_p$ on $P_n$ with $\eta\geq \lambda$. If $f$ and $g$ have the same number of generalized zeros, then the generalized zeros of $f$ and $g$  coincide.
\end{lemma}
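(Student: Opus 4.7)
The plan is a proof by contradiction using Theorem \ref{thm:rehak}. Let $a_1 < \dots < a_m$ and $b_1 < \dots < b_m$ index the generalized zeros $(a_i, a_i+1]$ and $(b_i, b_i+1]$ of $\tilde f$ and $\tilde g$ respectively, taken within one period of their extensions to $\mathbbm Z$. Assume for contradiction that $a_j \neq b_j$ for some $j$.

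The first step is to identify an index $i$ at which the configuration $a_{i-1} = b_{i-1}$ together with $a_i < b_i$ holds. If the smallest mismatched index is some $i \geq 2$, then by its minimality $a_{i-1} = b_{i-1}$ automatically. When the first mismatch occurs at $i = 1$, we exploit the periodic and mirror-symmetric structure of the extensions to supply a preceding matched zero $a_0 = b_0$ from the previous period (using, after suitable cyclic relabeling, that $a_m = b_m$). The desired inequality direction $a_i < b_i$ rather than $a_i > b_i$ is secured by combining the above with an auxiliary Sturm-type argument on the complementary interval.

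With the index $i$ in hand, consider the discrete interval $[s, t] := [a_{i-1}, b_i - 1]$. Inside $[s, t]$, the only generalized zero of $\tilde g$ lies at $b_{i-1} = a_{i-1}$, since the next zero $b_i$ of $\tilde g$ sits at $b_i > t$; hence $\tilde g$ is disconjugate on $[s, t]$. Theorem \ref{thm:rehak} together with $\eta \geq \lambda$ then forces $\tilde f$ to be disconjugate on $[s, t]$, so that $\tilde f$ has at most one generalized zero there. However, $\tilde f$ has generalized zeros at both $a_{i-1} = s$ and $a_i \in [s, t]$, the latter thanks to the chain $a_{i-1} < a_i \leq b_i - 1$. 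This contradicts disconjugacy and proves the lemma.

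The main obstacle is arranging the ``matching-then-mismatching'' configuration: when no two $a$'s and $b$'s coincide outright, the preceding matched zero must be supplied via the periodicity of $\tilde f$ and $\tilde g$, and the correct inequality direction must be verified by a subsidiary Sturm-type argument so that Theorem \ref{thm:rehak} is applicable in the direction we need.
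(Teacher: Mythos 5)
Your local contradiction step is fine: \emph{if} you can exhibit consecutive generalized zeros with $a_{i-1}=b_{i-1}$ and $a_i<b_i$, then on $[a_{i-1},\,b_i-1]$ the sequence $\tilde g$ has exactly one generalized zero while $\tilde f$ has two, and Theorem \ref{thm:rehak} gives the contradiction. The genuine gap is in producing that configuration. Nothing guarantees that \emph{any} generalized zero of $f$ coincides with one of $g$: it may well happen that $a_j\neq b_j$ for every $j$ (the two families can interlace), and then there is no matched anchor $a_{i-1}=b_{i-1}$ to start from. Your fallback for a mismatch at $i=1$ --- supplying $a_0=b_0$ from the previous period ``using that $a_m=b_m$'' --- is circular: $a_m=b_m$ is an instance of the very conclusion of the lemma, and it fails precisely in the all-mismatch scenario; moreover the reflected, periodized extensions only contribute \emph{mirrored} zeros, never a matched pair, unless the zeros already coincide. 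The second unproved ingredient, the ``auxiliary Sturm-type argument'' securing the direction $a_i<b_i$, is not a minor patch either: with a matched anchor and $a_i>b_i$ the same interval trick yields no contradiction (the eigenfunction with the larger eigenvalue is allowed to have two generalized zeros on an interval where the other has one, since Theorem \ref{thm:rehak} transfers disconjugacy only from $\tilde g$ to $\tilde f$), so that case needs information imported from the other end of the path, which you never supply.

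The paper's proof avoids needing any matched anchor. It first establishes the one-sided inequality (each generalized zero of the eigenfunction with the larger eigenvalue occurs no later than the corresponding one of the other eigenfunction) by applying Theorem \ref{thm:rehak} to an interval symmetric about the reflection point $1/2$ of the extensions: by the mirror symmetry this interval contains two generalized zeros of one extension and none of the other, so no coincidence of zeros is required to get started, and an induction along the path propagates the inequality to all indices. Running the same argument from the right end, using the reflection at $n+1/2$ and the equality of the total counts, gives the reverse inequality, and the two squeezes force the zeros to coincide. To repair your proof you would essentially have to reproduce this two-sided argument, replacing the hoped-for matched anchor by the symmetry of $\tilde f$ and $\tilde g$.
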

\begin{proof} The proof is a direct consequence of Theorem \ref{thm:rehak}. We briefly sketch the argument.
	Let  $(a_1, a_1+1]$, $\dots$, $(a_k, a_k+1]$ and $(b_1, b_1+1]$, $\dots$, $(b_k, b_k+1]$ be the generalized zeros of $g$ and $f$ respectively, ordering them so that $2 \leq a_i+1\leq a_{i+1}\leq n-1$ and $2\leq b_{i}+1\leq b_{i+1}\leq n-1$, for $i=1, \dots, k$. Using the symmetry of $\tilde f $ and $\tilde g$ one observes that $b_1\geq a_1$, as otherwise Theorem \ref{thm:rehak} would be contradicted. This implies that $b_2\geq a_2$, as $b_2<a_2$ would imply that $g$ is disconjugate on $[b_1, a_{2}+1]$, while $f$ is not. Proceeding by induction we have $b_i \geq a_i$ for $i=1, \dots, k$. A similar argument shows that $b_k\leq a_k$, thus $b_i\leq a_i$ for $i=1, \dots, k$.
\end{proof}
Let us make a few further remarks. Let $f$ be an eigenfunction on $P_n$. Then $f(1)f(n)\neq 0$. Indeed $f(1)=0$ implies $0=\mathcal H(\lambda,\tilde f, 0)=\Phi_p\big(f(2)\big)$ and thus $f(i)=0$ for all $i \in V$. Similarly for $f(n)$. Moreover, the next lemma shows that all variational eigenvalues of $P_n$ are distinct.
\begin{lemma}\label{thm:thight}Let $p>1$ and let $0=\lambda_1<\lambda_2\leq \dots \leq \lambda_n$ be the variational eigenvalues of $\L_p$ on $P_n$. Then $0<\lambda_2 <\lambda_3 <\dots <\lambda_n$. 
\end{lemma}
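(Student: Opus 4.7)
My plan is to derive the strict inequalities by combining two ingredients: the simplicity of eigenvalues of $\L_p$ on the path graph $P_n$, and the classical Lusternik-Schnirelman bound on the multiplicity of a variational eigenvalue by the Krasnoselskii genus of the corresponding critical set.

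The first step shows that every eigenvalue $\lambda$ of $\L_p$ on $P_n$ has a one-dimensional set of associated eigenfunctions. The remark preceding the lemma gives $f(1)\neq 0$ for any nontrivial eigenfunction, so we may normalize $f(1)=1$. The eigenvalue equation at $u=1$ reads $\Phi_p(1-f(2))=\lambda\,\Phi_p(1)$, which by the strict monotonicity of $\Phi_p$ uniquely determines $f(2)$; proceeding with the equations at $u=2,3,\ldots,n-1$ recursively fixes $f(3),\ldots,f(n)$ from $f(1),\ldots,f(u)$ and $\lambda$. Intersecting with $\SS_p$, the critical set $K_\lambda(\R_p|_{\SS_p})$ therefore reduces to exactly two antipodal points and has Krasnoselskii genus equal to one.

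The second step is the standard Lusternik-Schnirelman multiplicity result: if $\lambda_k=\lambda_{k+1}=\cdots=\lambda_{k+r-1}=\lambda$, then $\gamma(K_\lambda(\R_p|_{\SS_p}))\geq r$. The argument adapts the proof of Lemma \ref{thm:existence}: pick a minimizing set $A^*\in\mathcal F_{k+r-1}(\SS_p)$ and a symmetric open neighborhood $N$ of the compact set $K_\lambda$ with $\gamma(\overline N)=\gamma(K_\lambda)$; apply the deformation theorem to find an odd homeomorphism $\theta$ with $\theta(A^*\setminus N)\subseteq \R_p|_{\SS_p}^{\lambda-\ep}$; and use the subadditivity $\gamma(A^*)\leq \gamma(A^*\setminus N)+\gamma(\overline N)$ of the genus to conclude that $\theta(A^*\setminus N)$ is a closed symmetric set of genus at least $k+r-1-\gamma(K_\lambda)$ lying in $\R_p|_{\SS_p}^{\lambda-\ep}$. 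The definition of $\lambda_{k+r-1-\gamma(K_\lambda)}$ then forces $\gamma(K_\lambda)\geq r$: otherwise, the variational value at this level would be at most $\lambda-\ep$ while simultaneously being at least $\lambda_k=\lambda$, a contradiction.

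Combining the two steps proves the lemma: if $\lambda_k=\lambda_{k+1}$ for some $k$, the multiplicity result gives $\gamma(K_{\lambda_k})\geq 2$, contradicting $\gamma(K_{\lambda_k})=1$ from simplicity, and hence $\lambda_k<\lambda_{k+1}$ for every $k=1,\ldots,n-1$. The main obstacle is to carry out the multiplicity step carefully: one needs the compactness of the critical set (which follows from the Palais-Smale condition as in Lemma \ref{thm:existence}), the existence of a small symmetric neighborhood $N$ of $K_\lambda$ with $\gamma(\overline N)=\gamma(K_\lambda)$, and the subadditivity of $\gamma$ on closed symmetric sets; all three are classical continuity properties of the Krasnoselskii genus, but their precise bookkeeping is the technical heart of the argument.
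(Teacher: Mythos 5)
Your proposal is correct and follows essentially the same strategy as the paper: the recursion starting from the normalization $f(1)=1$ (using $f(1)\neq 0$ and the strict monotonicity of $\Phi_p$) shows that an eigenfunction of $\L_p$ on $P_n$ is determined by its eigenvalue up to scaling, and a repeated variational eigenvalue would contradict this. The only difference is in packaging the multiplicity step: the paper produces a second linearly independent eigenfunction by reusing the argument of Theorem \ref{thm:p_laplacian-weak-domains} (deformation theorem together with Lemma \ref{thm:existence}), while you prove the classical Lusternik--Schnirelman bound $\gamma(K_\lambda)\geq r$ and compare it with $\gamma(K_\lambda)=1$; both rest on the same deformation/genus machinery, so this is a more explicit rendering of the same route rather than a new one.
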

\begin{proof}
Suppose $\lambda$ and $\eta$ are two variational eigenvalues with $\lambda = \eta$ and let $f$ be any  eigenfunction of $\lambda$. Arguing as in the proof of Theorem \ref{thm:p_laplacian-weak-domains}, there exists an eigenfunction $g$ of $\eta$  which is linear independent with respect to $f$. We can assume w.l.o.g.\  that $f(1)=g(1)=1$. Then 
$$\mathcal H(\lambda,\tilde f, 0) = \Phi_p\big(1-f(2)\big)-\lambda  = \Phi_p\big(1-g(2)\big)-\lambda  =\mathcal H(\lambda,\tilde g, 0) $$
	implying $f(2)=g(2)$. By induction we get $f=g$, leading to a contradiction. 
\end{proof}

The following theorem, finally, gives a complete description of the $p$-Laplacian  nodal domains of the path graph, for any $p>1$.
\begin{theorem} 
Let $p>1$ and let $f$  be an eigenfunction corresponding to the variational eigenvalue $\lambda_k$ of $\L_p$ on $P_n$. Then the number of zero entries of $f$ is at most $k-1$, and it induces exactly $k$ weak and strong nodal domains.	 
\end{theorem}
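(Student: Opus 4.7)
The plan is to translate the statement on nodal domains into a count of generalized zeros of the eigenfunction, and then combine the upper bound from the nodal domain theorem of Section \ref{sec:nodal-theorem} with a distinctness argument supplied by Lemma \ref{lem:genzero}.

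First I would establish the structure of the zero set of any eigenfunction $f$ of $\L_p$ on $P_n$ associated to $\lambda>0$: every zero entry is interior and is flanked by nonzero values of opposite sign. Substituting $f(u)=0$ at an interior vertex into the eigenequation yields $\Phi_p(-f(u-1))+\Phi_p(-f(u+1))=0$, hence $f(u-1)=-f(u+1)$; if either were zero, both would be, and iterating would force $f\equiv 0$. A similar calculation at the boundary rules out $f(1)=0$ and $f(n)=0$. Consequently the zeros are isolated, and both the strong and the weak nodal domain counts of $f$ on $P_n$ equal $1+m$, where $m$ is the number of generalized zeros of $f$: each transition between adjacent nodal domains, whether direct or mediated by an isolated zero, contributes exactly one generalized zero, and each zero entry of $f$ accounts for one such generalized zero (the pair $(i-1,i]$).

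Let $m_k$ denote the number of generalized zeros of an eigenfunction $f_k$ of $\lambda_k$. By Theorems \ref{thm:p_laplacian-strong-domains} and \ref{thm:p_laplacian-weak-domains}, combined with Lemma \ref{thm:thight} (variational eigenvalues on $P_n$ are simple), we obtain $m_k\le k-1$. Since $m_1=0$ (constant eigenfunction) and the $m_k$ lie in $\{0,\dots,n-1\}$, the identity $m_k=k-1$ will follow by induction once we show that $m_1,\dots,m_n$ are pairwise distinct: then $\{m_1,\dots,m_k\}$ must exhaust $\{0,\dots,k-1\}$, forcing $m_k=k-1$. To prove distinctness, suppose $m_k=m_\ell$ with $\lambda_k<\lambda_\ell$. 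Lemma \ref{lem:genzero} forces the generalized zeros of $f_k$ and $f_\ell$ to sit at the same indices $b_1<\dots<b_m$. One then propagates this coincidence into equality of the full eigenfunctions, following the strategy of the proof of Lemma \ref{thm:thight}: normalize $f_k(1)=f_\ell(1)=1$, exploit the reflection symmetry of the extensions $\tilde f_k,\tilde f_\ell$ and iterate the recurrence \eqref{eq:difference-equation}, using the Dirichlet-like constraint at each $b_i$ (where either $f(b_i+1)=0$ or $f(b_i)f(b_i+1)<0$). Once $f_k=f_\ell$ is established, substitution into \eqref{eq:difference-equation} forces $\lambda_k=\lambda_\ell$, contradicting Lemma \ref{thm:thight}.

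Given $m_k=k-1$, the nodal domain counts of $f_k$ both equal $k$, and the number of zero entries, each corresponding to a distinct generalized zero, is at most $m_k=k-1$. The main obstacle is the distinctness step: Lemma \ref{lem:genzero} only locates the generalized zeros, and converting this into equality of the nonlinear eigenfunctions $f_k$ and $f_\ell$ demands a careful bootstrap through the recurrence, propagating the coincidence of $b_i$ from one sub-interval to the next despite the fact that $\Phi_p$ is not additive when $p\neq 2$, so standard linear uniqueness arguments are unavailable.
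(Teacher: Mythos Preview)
Your reduction to generalized zeros and the counting $\nu(f)=1+m$ is sound, and the combinatorial step ``$m_k\le k-1$ plus pairwise distinctness forces $m_k=k-1$'' is fine. The gap is in the distinctness step itself: the argument you propose cannot be carried out.

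You want to show that if $m_k=m_\ell$ with $\lambda_k<\lambda_\ell$, then $f_k=f_\ell$, by ``following the strategy of the proof of Lemma~\ref{thm:thight}''. But that proof relies essentially on the hypothesis $\lambda=\eta$: with $f_k(1)=f_\ell(1)=1$, the recurrence at the boundary reads $\Phi_p(1-f_k(2))=\lambda_k$ and $\Phi_p(1-f_\ell(2))=\lambda_\ell$, so $f_k(2)\neq f_\ell(2)$ as soon as $\lambda_k\neq\lambda_\ell$. The two functions solve \emph{different} recurrences, and no amount of matching generalized-zero locations will make them coincide. In short, the target ``$f_k=f_\ell$'' is false, not merely hard to reach; the obstacle you flag at the end is fatal, not technical.

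What the paper does instead is closer in spirit to a Sturm oscillation comparison. Assuming, for contradiction, that the eigenfunction $g$ of $\lambda_k$ has the same number of generalized zeros as the eigenfunction $f$ of $\lambda_{k-1}$ (so, by Lemma~\ref{lem:genzero}, the same zero positions), one uses the strict inequality $\lambda_{k-1}<\lambda_k$ to propagate a strict inequality between the discrete ``Pr\"ufer angles'':
\[
\Phi_p\!\left(1-\frac{f(i+1)}{f(i)}\right)<\Phi_p\!\left(1-\frac{g(i+1)}{g(i)}\right)
\]
for all $i$ in a suitable index set, by induction along the path. Running the same induction from the right endpoint yields the reversed family of strict inequalities, and the two collide at some pair of consecutive indices. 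This is the substitute for the linear uniqueness argument that is unavailable when $p\neq 2$; it exploits $\lambda_{k-1}<\lambda_k$ as a strict inequality throughout rather than trying to collapse the two eigenfunctions.
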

\begin{proof}
	Let us write $\nu(f)$  to denote  the number of weak nodal domains of $f$. Observe that an eigenfunction of $\L_p$ on the path graph cannot vanish on two consecutive entries
	as otherwise it would be constant zero. Indeed, if $i$ is such that $f(i)\neq 0$ and $f(i+1)=0$, then by \eqref{lapl} we have $\Phi_p(f(i+2))=-\Phi_p(f(i))$.  This implies that the number of zero entries of $f$ is at most $\nu(f)-1$.   
	
		Let us show that  $\nu(f)=k$. The statement is true for $k=1,2$ due to Corollary \ref{cor:second-eigenvector}.   We proceed by induction. For $k>2$ assume that $\nu(f)=k-1$ for any  eigenfunction $f$ of $\lambda_{k-1}$, and let $g$ be an eigenfunction of $\lambda_{k}$. Note that, as the multiplicity of each $\lambda_k$ is one, by the nodal theorem if follows that $\nu(g)\leq k$. Arguing as in Lemma \ref{lem:genzero} using Theorem \ref{thm:rehak}, one observes that the  the overall number of generalized zeros of $g$ cannot be less than the one of $f$. If follows that $\nu(g)\geq k-1$. To complete the proof we show that $\nu(g)\neq k-1$.  To this end, we assume that $\nu(g)=k-1$ and we show that this  implies a contradiction.	
	  Equation \eqref{lapl} for $f$ and $g$ becomes
		\begin{align}\label{lapl2}
		\lambda_{k-1}\,  \Phi_p\big(f(i)\big)&=\Phi_p\big(f(i)-f(i+1)\big)-\Phi_p\big(f(i-1)-f(i)\big)\\
		\lambda_{k} \,   \Phi_p\big(g(i)\big)&=\Phi_p\big(g(i)-g(i+1)\big)-\Phi_p\big(g(i-1)-g(i)\big)\, \, .\label{lapl3}
		\end{align}
		Consider the set $V_+=\{i\in V : f(i)g(i)\neq 0\}$. We show by induction that the following inequalities hold
		\begin{equation}\label{eq:thesis}
		 \Phi_p\(1-\frac{f(i+1)}{f(i)}\)< \Phi_p\(1-\frac{g(i+1)}{g(i)}\), \quad \forall i\in V_+\setminus\{n\}\, .
		\end{equation}
		As $k-1= \nu(f)=\nu(g)$, then Lemma \ref{lem:genzero} implies that  $f$ and $g$  have the same generalized zeros.  Since $\tilde f(0)=f(1)$ and $\tilde g(0)=g(1)$, from $\lambda_{k-1}<\lambda_k$, \eqref{lapl2}  and \eqref{lapl3}  we get 
		$\Phi_p(1-f(2)/f(1))< \Phi_p(1-g(2)/g(1))$. 
		 We have $1 \in V_+$ and \eqref{eq:thesis} holds for $i=1$. We assume that $i-1\in V_+$ satisfies \eqref{eq:thesis}, and show that the same holds for the next index in $V_+$. 
		 There are two possible cases: either $i \in V_+$, which we discuss next, or $i \not \in V_+$, which we discuss below.
	
		If $i \in V_+$ then $f(i)g(i)\neq 0$ and we derive from $\lambda_{k-1}<\lambda_k$, \eqref{lapl2} and \eqref{lapl3}  that 
		\begin{equation}\label{x}
		\Phi_p\!\(\!\!1-\frac{f(i+1)}{f(i)}\!\!\) \!- \Phi_p\!\(\!\!1-\frac{g(i+1)}{g(i)}\!\!\)\!<\!\Phi_p\!\!\(\!\frac{f(i-1)}{f(i)}-1\!\!\right)  \!- \!\Phi_p\!\left(\!\!\frac{g(i-1)}{g(i)}-1\!\!\right)
		\end{equation} 
		Note that, as $f$ and $g$ have the same generalized zeros and  $f(i)g(i)f(i-1)g(i-1)\neq 0$, then  $f$ and $g$ have the same sign on $[i-1,i]$. Therefore $i-1\in V_+$ and 
		  \eqref{x} imply  
		that  \eqref{eq:thesis} holds for $i\in V_+$.
		
		Now let us discus the case $i\notin V_+$. Note that, as \eqref{eq:thesis} holds for $i-1\in V_+$, $f(i)$ and $g(i)$ can not be both zero.	
		 
		The case $g(i)=0$ and $f(i)\neq 0$ is not possible. In fact, as \eqref{eq:thesis} holds for $i-1\in V_+$, then $\Phi_p\(1-f(i)/f(i-1)\right)< 1$, 
		showing that $f(i)f(i-1)>0$. On the other hand $g(i)=0$ implies  
		that $g$ has a generalized zero on $(i-1, i]$, yielding a contradiction.
		
		Finally, if $f(i)=0$ and $g(i)\neq 0$, then as \eqref{eq:thesis} holds for $i-1$
		we have 
		$g(i-1)g(i)< 0$. Therefore $(i-1,i]$ is a generalized zero for $g$. Now note that $f(i)=0$ implies that $(i, i+1]$ is not a generalized zero of $f$. Thus,  by Theorem \ref{thm:rehak},  $g(i)g(i+1)>0$.  
	  We deduce that $i+1 \in V_+$ and  from $\lambda_{k-1}<\lambda_k$, \eqref{lapl2} and \eqref{lapl3}, we get 
		\begin{equation}\label{xxx}
		\Phi_p\(1-\frac{f(i+2)}{f(i+1)}\right)-\Phi_p\(1-\frac{g(i+2)}{g(i+1)}\right)<-1-\Phi_p\(\frac{g(i)}{g(i+1)}-1\right)
		\end{equation}
		As $g(i)g(i+1)>0$ we have 
		$ \Phi_p\({g(i)}/{g(i+1)}-1\right)>-1$
		and we obtain from \eqref{xxx} that \eqref{eq:thesis} holds for $i+1\in V_+$.
		
		Now observe that  we can proceed 
		the other way round to show that the following sequence of inequalities holds as well
	   \begin{equation}\label{yy}
		\Phi_p\left(1-\frac{f(i-1)}{f(i)}\right)<  \Phi_p\left(1-\frac{g(i-1)}{g(i)}\right), \quad \forall i \in V_+\setminus\{1\}\, .
		\end{equation} 
		In fact, since $f(n)=\tilde f(n+1)\neq 0$ and $g(n)=\tilde g(n+1)\neq 0$, then $n\in V_+$ and \eqref{yy} holds for $i=n$. Thus we have the basis for the induction and we can repeat the same argument as before.  
		To conclude we observe that there exist two consecutive 
		indices $m$ and $m+1$ in  $V_+$, thus by plugging  $i = m$ into \eqref{eq:thesis} and $i=m+1$ into \eqref{yy} we obtain a contradiction. To this end note that, as $f(1)\neq 0$,  if there are no consecutive indices in $V_+$, then  $f(i)=0$ for all even indices $i$. Therefore \eqref{lapl2} implies that $f(i)$ is nonzero for $i$ odd, and we get $\lambda_{k-1}\Phi_p(f(1))=\Phi_p(f(1))$ and $\lambda_{k-1}\Phi_p(f(3))=2\Phi_p(f(3))$, which is not possible.
\end{proof}

\section{Nodal properties of the $1$-Laplacian}\label{sec:L1} 

We devote this section to discuss the non-smooth case of the $1$-Laplacian which becomes a set-valued operator.
With the set-valued sign operator $\Sign(x)=\{1\}$ if $x > 0$, $\Sign(x)=\{-1\}$, if $x <0$ and $\Sign(x)=[-1,1]$ for $x=0$,
it is then straightforward to verify that the $1$-Laplacian is the operator realizing the following entrywise identity \cite{Hein2010},
$$(\L_1 f)(u) = \Big\{\sum_{v\in V} w(uv)z(uv) \mid z(uv)=-z(vu), \, z(uv) \in \Sign(f(u)-f(v))\Big\}\, .$$ 
The corresponding eigenequation \cite{Hein2010,Chang2014} reads
\[ 0 \in (\L_1 f)(u) - \lambda \,\mu(u) \Sign(f(u)).\]
It has been shown that this is a necessary condition for a critical point of the associated non-smooth Rayleigh quotient $\R_1$ \cite{Hein2010} via the Clarke subdifferential, and more recently also to be sufficient \cite{Chang2014}.

 The classical  Lusternik-Schnirelman theory 
 can be extended to the case of a locally Lipschitz functional (see \cite[Sec.\ 3]{Chang1981} and \cite{Chang2014}) and provides a  variational characterization of the 
 spectrum of $\L_1$.  In particular, as for $p>1$, the following sequence
$$\lambda_k^{(1)} = \min_{A \in \mathcal F_k(\SS_1)}\max_{f \in A}\, \R_1(f), \qquad k = 1,\dots, n$$
defines a set of $n$ variational eigenvalues of $\L_1$. 

Our nodal domain theorems carry over the case $p=1$, but in a weaker form. The main difference is that the number of weak nodal domains of the $k$-th variational eigenfunction is upper bounded by $k+r-1$ (where $r$ is the multiplicity of the corresponding eigenvalue $\lambda_k^{(1)}$) instead of $k$, as for $p>1$. 
Note indeed that the proof of the Theorem \ref{thm:p_laplacian-strong-domains} holds unchanged if $p=1$. This is not the case for the weak nodal domains. 
Thus, the nodal domain theorem for the $1$-Laplacian reads as follows
\begin{theorem}\label{thm:1-L}
Let $G$ be connected and $0=\lambda_1< \dots \leq \lambda_n$ 
be the variational eigenvalues of the $1$-Laplacian.  If $\lambda_k$ has multiplicity $r$, then any eigenfunction  of $\lambda_k$ induces at most $k+r-1$ strong and weak nodal domains. 
\end{theorem}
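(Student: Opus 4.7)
The plan is to reduce both assertions to ingredients already available in the excerpt, namely Theorem \ref{thm:p_laplacian-strong-domains}, whose proof is valid verbatim at $p=1$, and Lemma \ref{le:NDS}, which is stated for all $p\geq 1$ and is proved there in both the strong and the weak nodal space versions. By hypothesis $\lambda_k$ has multiplicity $r$, which in the convention of the excerpt means $\lambda_{k-1}<\lambda_k=\lambda_{k+1}=\cdots=\lambda_{k+r-1}$, and $\lambda_{k+r-1}<\lambda_{k+r}$ whenever $k+r-1<n$.

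For the strong nodal domain bound I simply invoke Theorem \ref{thm:p_laplacian-strong-domains}. Its proof only uses Lemma \ref{le:NDS} (valid for $p\geq 1$), the linear independence of the restrictions $f|_{A_i}$ to strong nodal domains (which holds trivially since distinct strong nodal domains are disjoint), the consequent equality $\gamma(F\cap \SS_p)=m$, and the variational characterization \eqref{eq:lambdas}; none of these require $p>1$. Applied with eigenvalue $\lambda=\lambda_k<\lambda_{k+r}$, the theorem yields at most $(k+r)-1=k+r-1$ strong nodal domains (and the bound is vacuous if $k+r-1=n$).

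For the weak nodal domain bound at $p=1$ I follow the same pattern but using the weak nodal space. Let $A_1,\dots,A_m$ be the weak nodal domains of an eigenfunction $f$ of $\lambda_k$ and set $F=\mathrm{span}\{f|_{A_1},\dots,f|_{A_m}\}$. First I verify linear independence of the generators: each $A_i$ contains some $v_i$ with $f(v_i)\neq 0$, and such a vertex lies in the \emph{unique} maximal connected component of $\{u:f(u)\geq 0\}$ or $\{u:f(u)\leq 0\}$ containing it, so it belongs to exactly one weak nodal domain. Hence $\dim F=m$, $\gamma(F\cap \SS_1)=m$, and $F\cap \SS_1\in \mathcal F_m(\SS_1)$. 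The weak-nodal-space case of Lemma \ref{le:NDS} (discussed in its last paragraph) gives $\R_1(g)\leq \lambda_k$ for every $g\in F$. The variational characterization \eqref{eq:lambdas} then yields
\[
\lambda_m \;\leq\; \max_{g\in F\cap \SS_1}\R_1(g)\;\leq\;\lambda_k\;=\;\lambda_{k+r-1},
\]
and since $\lambda_{k+r-1}<\lambda_{k+r}$ (or $k+r-1=n$), this forces $m\leq k+r-1$.

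The step that is worth highlighting rather than being a real obstacle is why the sharper bound $m\leq k$ from Theorem \ref{thm:p_laplacian-weak-domains} is lost at $p=1$. That sharper bound was obtained by propagating, across every pair of adjacent weak nodal domains, the identity $\alpha_s=\alpha_r$, which was extracted from the equality case of Lemma \ref{lem:ax-by}; for $p>1$ that equality case forces $a=b$, but for $p=1$ it only forces $ab\geq 0$, and the propagation breaks down. Consequently the coarser genus estimate above is the best one can hope for in the $p=1$ regime, and it produces exactly the $k+r-1$ bound claimed.
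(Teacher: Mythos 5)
Your overall route is the same one the paper takes (the paper only sketches it): the strong bound by observing that the proof of Theorem \ref{thm:p_laplacian-strong-domains} works verbatim for $p=1$ and applying it with $\lambda=\lambda_k<\lambda_{k+r}$, and the weak bound by running the genus argument on the weak nodal space via Lemma \ref{le:NDS} and \eqref{eq:lambdas}; your diagnosis of why the sharper bound $k$ is lost (the equality case of Lemma \ref{lem:ax-by} only gives $ab\geq 0$ when $p=1$) is also the paper's.

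There is, however, one step in your weak-domain argument that genuinely fails at $p=1$: the claim that every weak nodal domain contains a vertex with $f(v)\neq 0$, which you use to conclude $\dim F=m$ and $\gamma(F\cap\SS_1)=m$. For $p>1$ this is true, because a vertex $u$ in an entirely-zero component of $\{f\geq 0\}$ that is adjacent to the outside would give $0=\lambda\mu(u)\Phi_p(0)=\sum_{v\sim u}w(uv)\Phi_p(-f(v))>0$; but for $p=1$ the eigen-relation is the inclusion $0\in(\L_1 f)(u)-\lambda\mu(u)\Sign(f(u))$, and at a zero vertex the set $\Sign(0)=[-1,1]$ can absorb a nonzero sum, so all-zero weak nodal domains do occur. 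For example, on the unweighted path $P_5$ with $\mu(u)=\sum_v w(uv)$, the function $f=(-1,0,0,-1,1)$ is an eigenfunction of $\L_1$ with eigenvalue $1$ (take the antisymmetric selection $z(v_1v_2)=-1$, $z(v_2v_3)=0$, $z(v_3v_4)=1$, $z(v_4v_5)=-1$ and check each vertex), and $\{v_2,v_3\}$ is a weak nodal domain on which $f$ vanishes identically. In such a case your span has dimension strictly smaller than $m$, so the genus argument only bounds the number of weak nodal domains containing a nonzero vertex, not $m$ itself; an additional argument is needed to account for the all-zero weak domains. To be fair, the paper does not spell out the weak case either, so this gap is inherited rather than introduced, but as written your justification of linear independence is not valid for $p=1$.
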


We finally show that Theorem \ref{thm:1-L} is tight by discussing the eigenfunctions for $p=1$ of the unweighted path graph.
%
For the sake of simplicity we consider this time the path graph  $P_3$ on three vertices 
\[ P_3=\begin{grafo}{4mm}
(0,0)\NodoO[4mm]{0}{v_1}{R}{},
(6,0)\NodoO[4mm]{1}{v_3}{R}{},
(3,0)\NodoO[4mm]{2}{v_2}{R}{},
\arcoD[]{0}{2}
\arcoD[]{1}{2}
\end{grafo} .
\]
With $z(v_2v_1) \in \Sign(f(v_2)-f(v_1))$ and $z(v_2v_3)\in\Sign(f(v_2)-f(v_3))$ we get the following system of equations for the 
eigenvalues and eigenfunctions of $\L_1$,  when $\mu(u)=\sum_{v}w(uv)$
$$
\left\{
\begin{array}{l}
-z(v_2v_1) \in \lambda\,  \Sign(f(v_1))\\
z(v_2v_1)+z(v_2v_3)  \in  2\, \lambda\,  \Sign(f(v_2))\\
-z(v_2v_3) \in \lambda\,  \Sign(f(v_3))
\end{array}
\right.
$$
We show in the following that any non-constant eigenfunction has eigenvalue $\lambda=1$. To this end we make a case distinction.
If $f(v_1)>0$, we have the cases
\begin{itemize}
	\item $f(v_2)<f(v_1)$ implies  $z(v_2v_1)=-1$, thus $\lambda=1$
	\item $f(v_2)>f(v_1)$ implies $z(v_2v_1)=1$, thus $\lambda=-1$, which is a contradiction as $\lambda \geq 0$
	\item $f(v_2)=f(v_1)>0$ implies $z(v_2v_1)=-\lambda$ and thus $z(v_2 v_3)=3\lambda$ together with $-z(v_2 v_3) \in \lambda \Sign(f(v_3))$ leads to a contradiction for $\lambda >0$
\end{itemize}
Similarly, if $f(v_1)=0$ we have the cases
\begin{itemize}
	\item $f(v_2)>f(v_1)=0$ implies $1+z(v_2 v_3) = 2\lambda$. If $f(v_3)\leq 0$ one has $z(v_2 v_3)=1$ and this yields
        $\lambda=1$. If $f(v_3)>0$, then $z(v_2 v_3)=-\lambda$ and thus $1=z(v_2 v_1)=3\lambda$ which together with $z(v_2 v_1) \in (-\lambda,\lambda)$ leads to a contradiction.
        \item $f(v_2)=f(v_1)=0$ and $f(v_3)>0$ yields $z(v_2 v_3)=-1$ and thus $\lambda=1$.
\end{itemize}
These are, up to sign, all the cases ones has to consider. In all the cases one gets the eigenvalue $\lambda=1$. Thus the variational eigenvalues have to be $\lambda_2^{(1)}=\lambda_3^{(1)}=1$. One eigenfunction $f$ for the eigenvalue $\lambda=1$ is  given by $f(v_1)=-f(v_2)=f(v_3)=1$. This eigenfunction has three weak and strong nodal domains and thus the result for $p>1$ that the number of weak nodal domains of the $k$-th eigenvalue with multiplicity $r$ is upper bounded by $k$ does not hold for the case $p=1$. Moreover, our bound of $k+r-1=2+2-1=3$ is tight for the given example.

\section{A higher order Cheeger inequality via nodal domains}\label{sec:cheeger}	
	A set of multi-way Cheeger constants $h_k(G)$,
	$k = 2,3,\dots$ alternatively called high-order isoperimetric constants, has been recently studied by \cite{Daneshgar2010,Lee2012,Miclo2008}. 
	For $A \subseteq V$, let $E(A,\bar A)$ be the set of edges having one endpoint in $A$ and one in the complement of $A$, denoted as $\bar A$. 
	Consider the quantity
	$$c(A)=\frac{w(E(A,\bar A))}{\mu(A)}$$
	where the measure of a discrete set is given by the sum of the weights of the elements in the set. Finally let $\mc D_k(G)$ be the set of $k$ non-empty, mutually disjoint subsets of $V$, $\mc D_k(G)=\{\emptyset \neq A_1, \dots, A_k \subseteq V: A_i\cap A_j =\emptyset\}$.  The higher-order isoperimetric constants $h_k(G)$ are defined as
	$$h_k(G) = \min_{\mc A\in \mc D_k(G)}\max_{A \in \mc A}\, c(A)$$

We conclude the paper by exploiting the relation among  the high-order ispoerimetric constants, the variational  eigenvalues of $\L_p$ and their nodal domains.
\begin{theorem}\label{thm:cheeger-domains}
For $p>1$, let $f:V \to \RR$ be an eigenfunction of $\L_p$ corresponding to the variational eigenvalue $\lambda_k^{(p)}$, and let $m$ be the number of its strong  nodal domains. Then 
 $$\frac{2^{p-1}}{\tau(G)^{p-1}}\, \frac{h_m(G)^p}{p^p} \,\leq\, \lambda_k^{(p)}\,\leq\, 2^{p-1}\,h_k(G)$$
 where $\tau(G)=\max_{u \in V} \frac{d(u)}{\mu(u)}$ and $d(u)=\sum_{v \in V} w(uv)$ is the degree of the vertex $u$.
\end{theorem}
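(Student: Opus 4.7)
My plan is to prove the two inequalities separately: the upper bound via a $k$-dimensional test subspace spanned by indicator functions, the lower bound via a $p$-Laplacian Cheeger estimate carried out on each strong nodal domain of an eigenfunction of $\lambda_k^{(p)}$.

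For $\lambda_k^{(p)}\le 2^{p-1}h_k(G)$, I would fix any $\{A_1,\dots,A_k\}\in\mc D_k(G)$ and consider the $k$-dimensional subspace $F=\mathrm{span}\{\mathbf 1_{A_1},\dots,\mathbf 1_{A_k}\}\subseteq\ell^p(\V)$. Its intersection with $\SS_p$ is closed, centrally symmetric, and of Krasnoselskii genus $k$, hence it belongs to $\mc F_k(\SS_p)$. For $g=\sum_i\alpha_i\mathbf 1_{A_i}$ the pairwise disjointness of the $A_i$ together with the convexity estimate $|\alpha_i-\alpha_j|^p\le 2^{p-1}(|\alpha_i|^p+|\alpha_j|^p)$ and a short edge-by-edge count yield $\R_p(g)\le 2^{p-1}\max_i c(A_i)$; invoking the min-max formula \eqref{eq:lambdas} and taking the infimum over $\mc D_k(G)$ gives the claim.

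For the lower bound, let $f$ be an eigenfunction of $\lambda_k^{(p)}$ with strong nodal domains $A_1,\dots,A_m$ and set $\phi_i=|f|_{A_i}|\ge 0$. Since $\phi_i=\pm f|_{A_i}$ and $\R_p$ is invariant under sign changes, Lemma \ref{le:NDS} gives $\R_p(\phi_i)=\R_p(f|_{A_i})\le\lambda_k^{(p)}$. For each $i$ introduce the superlevel sets $B_i^{(t)}=\{u:\phi_i(u)^p>t\}\subseteq A_i$ and let $c_i^\star=\inf_{t}c(B_i^{(t)})$. The layer-cake identity provides the lower bound
\[
\sum_{uv\in E}w(uv)|\phi_i(u)^p-\phi_i(v)^p|\;=\;2\int_0^\infty w(E(B_i^{(t)},\overline{B_i^{(t)}}))\,dt\;\ge\;2c_i^\star\|\phi_i\|_{\ell^p(\V)}^p.
\]
To upper-bound the same quantity I would use the integral representation $|\phi_i(u)^p-\phi_i(v)^p|=p\,|\phi_i(u)-\phi_i(v)|\int_0^1\xi_s^{p-1}\,ds$, where $\xi_s=(1-s)\phi_i(v)+s\phi_i(u)$, followed by Fubini and Hölder with conjugate exponents $p$ and $p/(p-1)$. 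The crucial estimate is $\sum_{uv\in E}w(uv)\,\xi_s^p\le\tau(G)\|\phi_i\|_{\ell^p(\V)}^p$ \emph{uniformly in} $s\in[0,1]$, which follows from the convexity bound $\xi_s^p\le(1-s)\phi_i(v)^p+s\phi_i(u)^p$ together with the symmetry of the ordered edge sum: the $(1-s)$ and $s$ pieces each produce $\sum_u d(u)\phi_i(u)^p\le\tau(G)\|\phi_i\|_{\ell^p(\V)}^p$ but combine with total weight $1$. Putting these ingredients together and raising to the $p$-th power yields $(c_i^\star)^p\le p^p\tau(G)^{p-1}\R_p(\phi_i)/2^{p-1}\le p^p\tau(G)^{p-1}\lambda_k^{(p)}/2^{p-1}$. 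The pairwise disjointness of the $A_i$ forces the minimizers $B_i^{(t_i^\star)}$ to form a family in $\mc D_m(G)$, whence $h_m(G)\le\max_i c_i^\star$ and the desired lower bound follows.

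The main technical obstacle is precisely this last Hölder/coarea chain: the clean constant $2^{p-1}$ would be lost if one replaced the interpolation through $\xi_s$ by the cruder mean-value bound $|\phi_i(u)^p-\phi_i(v)^p|\le p\max(\phi_i(u),\phi_i(v))^{p-1}|\phi_i(u)-\phi_i(v)|$ and $\max(a,b)^p\le a^p+b^p$, since the ordered edge sum $\sum w\,\max^p$ then picks up an unavoidable extra factor of $2$; going through the interpolant $\xi_s$ and using convexity inside the integral exploits the symmetry of the sum and recovers the tight constant.
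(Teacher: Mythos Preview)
Your proposal is correct and follows the same strategy as the paper: the upper bound is identical (span of indicators, genus $k$, convexity), and the lower bound combines Lemma~\ref{le:NDS} on each strong nodal domain with a layer-cake/H\"older estimate, which the paper packages separately as Lemma~\ref{lem:key1}. The only cosmetic difference is that the paper invokes Amghibech's pointwise inequality $\bigl(\tfrac{1}{p}\,\tfrac{|a^p-b^p|}{|a-b|}\bigr)^q\le\tfrac{a^p+b^p}{2}$ as a black box after H\"older, whereas you obtain the same constant by writing $|a^p-b^p|=p\,|a-b|\int_0^1\xi_s^{p-1}\,ds$, swapping sum and integral, and using convexity of $t\mapsto t^p$ on the ordered edge sum; since Amghibech's inequality is itself proved by exactly this interpolation-plus-convexity route, the two computations are equivalent.
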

This theorem is a {direct} generalization of Theorem 5 in Daneshgar et al \cite{Daneshgar2010}, where the result was proven for the linear graph Laplacian ($p=2$) and $\mu(u)=d(u)$.
For $k=2$ the result has been shown in \cite{Amghibech2003,Thomas2009} for $p>1$ and it has been noted there that the inequality becomes tight
as $p\rightarrow 1$ as the second eigenfunction always has two strong nodal domains given that the graph is connected. The equality for $p=1$ and $k=2$ has been shown in \cite{Hein2010}, see also \cite{Chang2014}. For $k>2$ the situation changes as now an extra condition is required in order that the higher order Cheeger inequality becomes tight for $p\rightarrow 1$. Namely, as $p$ approaches one, the number of {strong} nodal domains of the eigenfunction corresponding to
the variational eigenvalue $\lambda_k$ has to become equal to $k$. As discussed in the preceding section, the unweighted path graph is a graph with this property. However it is known that, when $G$ is not a tree, the number of nodal domains of the eigenfunctions of $\lambda_k^{(2)}$ is in general less than $k$. In fact, for $p=2$, the number of strong nodal domains induced by any eigenfunction $f$ of $\lambda_k^{(2)}$ is at least $k+r-1-\ell-z$, where $z$ is the number of vertices where $f$ is zero, and $\ell$ the minimal number of edges that need to be removed from $G$ in order to turn it into a tree \cite{Berkolaiko2008,Xu2012}. It remains an interesting open problem to generalize these lower bounds to the nonlinear case $p\neq 2$.

The proof of Theorem \ref{thm:cheeger-domains} relies on the following Lemma which is of independent interest. 
\begin{lemma}\label{lem:key1}
 For any  $f:V\to \RR$ and any $p>1$, there exists  $A\subseteq \{u : f(u)\neq 0\}$ such that 
 $$\R_p(f)\geq \left(\frac{2}{\tau(G)}\right)^{p-1}\left(\frac{c(A)}{p}\right)^p$$
\end{lemma}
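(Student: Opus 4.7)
The plan is threefold: reduce to nonnegative $f$, produce $A$ via a co-area argument on the super-level sets of $f^p$, and close the estimate with a sharp pointwise bound for $|a^p-b^p|$.

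Setting $g=|f|$ leaves the denominator of $\R_p$ unchanged, while $\bigl||f(u)|-|f(v)|\bigr|\le |f(u)-f(v)|$ gives $\R_p(g)\le\R_p(f)$, and $\{g\ne 0\}=\{u:f(u)\ne 0\}$; hence it suffices to treat $f\ge 0$. For $t\ge 0$ define $A_t=\{u:f(u)^p>t\}$. Standard layer-cake identities yield
$$\sum_u \mu(u)f(u)^p=\int_0^\infty \mu(A_t)\,dt,\qquad\sum_{uv\in E}w(uv)|f(u)^p-f(v)^p|=\int_0^\infty w(E(A_t,\bar A_t))\,dt,$$
and the averaging estimate $\int\alpha\,dt\big/\int\beta\,dt\ge\inf(\alpha/\beta)$ produces some $t^*>0$ with
$$c(A_{t^*})\le\frac{\sum_{uv\in E}w(uv)|f(u)^p-f(v)^p|}{\sum_u\mu(u)f(u)^p}.\qquad(\dagger)$$
Since $t^*>0$ we have $A_{t^*}\subseteq\{f>0\}=\{u:f(u)\ne 0\}$, so $A:=A_{t^*}$ is admissible.

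The analytic heart of the argument is the sharp pointwise inequality
$$|a^p-b^p|\le p|a-b|\Bigl(\tfrac{a^p+b^p}{2}\Bigr)^{(p-1)/p},\qquad a,b\ge 0.\qquad(\star)$$
For $a>b\ge 0$, parametrize $t=(1-s)b+sa$ so that $\tfrac{a^p-b^p}{p(a-b)}=\int_0^1 t^{p-1}\,ds$. Applying Hölder's inequality with conjugate exponents $p/(p-1)$ and $p$ against the constant function $1$ gives $\int_0^1 t^{p-1}\,ds\le\bigl(\int_0^1 t^p\,ds\bigr)^{(p-1)/p}$, and Hermite-Hadamard applied to the convex map $x\mapsto x^p$ gives $\int_0^1 t^p\,ds=\tfrac{1}{a-b}\int_b^a x^p\,dx\le\tfrac{a^p+b^p}{2}$. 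Combined, these prove $(\star)$.

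To close, insert $(\star)$ into $(\dagger)$ and apply Hölder on the edge sum with conjugate exponents $p$ and $p/(p-1)$ to get
$$\sum_{uv\in E}w(uv)|f(u)^p-f(v)^p|\le p\Bigl(\sum_{uv}w(uv)|f(u)-f(v)|^p\Bigr)^{\!1/p}\!\Bigl(\sum_{uv}w(uv)\tfrac{f(u)^p+f(v)^p}{2}\Bigr)^{\!(p-1)/p}\!.$$
The last factor is at most $(\tau(G)/2)^{(p-1)/p}D^{(p-1)/p}$ with $D=\sum_u\mu(u)f(u)^p$, since $\sum_{uv\in E}w(uv)(f(u)^p+f(v)^p)=\sum_u d(u)f(u)^p\le \tau(G)D$; dividing by $D$ and identifying the remaining ratio as $\R_p(f)^{1/p}$ yields $c(A_{t^*})\le p(\tau(G)/2)^{(p-1)/p}\R_p(f)^{1/p}$, which rearranges to the desired bound. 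The main obstacle is obtaining the sharp constant: the cruder bound $|a^p-b^p|\le p\max(a,b)^{p-1}|a-b|$ followed by $\max(a,b)^p\le a^p+b^p$ would lose exactly the factor $2^{p-1}$, so the "averaged" right-hand side of $(\star)$ supplied by Hermite-Hadamard is what provides the correct Cheeger constant.
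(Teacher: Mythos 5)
Your argument is correct and follows essentially the same route as the paper's proof: layer-cake identities for the super-level sets of $|f|^p$, an averaging step to extract the set $A$, H\"older's inequality on the edge sum, and the pointwise bound $|a^p-b^p|\le p\,|a-b|\bigl(\tfrac{a^p+b^p}{2}\bigr)^{(p-1)/p}$, which is exactly the inequality the paper imports from Amghibech; your only genuine additions are the harmless preliminary reduction to $f\ge 0$ and a short self-contained proof of that inequality via H\"older and Hermite--Hadamard. Just make sure your edge sums are read consistently (each undirected edge counted once), so that $\sum_{uv}w(uv)\bigl(f(u)^p+f(v)^p\bigr)=\sum_u d(u)f(u)^p$ and the final ratio is indeed $\R_p(f)$ under the paper's normalization with the factor $\tfrac12$.
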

\begin{proof} 
Consider the sets $E_0 = \{uv \in E: |f(u)|^p-|f(v)|^p=0\}$,  $E_+ = \{uv \in E: |f(u)|^p-|f(v)|^p>0\}$ and, for  $\lambda\geq 0$,   $A_\lambda = \{u \in V : |f(u)|^p>\lambda\}$. By changing the order of summation and integration, and by the definition of $\mu(A_\lambda)$ we have
\begin{equation}\label{eq:0}
 \int_0^\infty \mu(A_\lambda)d\lambda = \int_0^\infty \sum_{u\in A_\lambda}\mu(u)d\lambda=
	\sum_{u \in V}\int_0^{|f(u)|^p}\mu(u)d\lambda=\|f\|_{\ell^p(\V)}^p
\end{equation}
	Now we derive an upper bound for $\int_0^{\infty}w(E(A_\lambda,\bar {A_\lambda}))d\lambda$. 
	Exchanging the role of integration and summation, as before, we have
	\begin{equation}\label{eq:a}
		\int_0^\infty w(E(A_\lambda, \bar{A_\lambda}))d\lambda = 
		\sum_{uv \in E_+}w(uv)\int_{|f(v)|^p}^{|f(u)|^p}d\lambda
		= \frac 12 \sum_{uv \in E}w(uv)\Bigl| |f(u)|^p-|f(v)|^p\Bigr|\, .
	\end{equation}
   Moreover, if $q$ is the H\"older conjugate of $p$, then  H\"older's inequality implies  
\begin{gather}
	\frac 12 \sum_{uv \in E}w(uv)\Bigl| |f(u)|^p-|f(v)|^p\Bigr|
	=\sum_{uv \not \in E_0}\frac{w(uv)}{2} \, | f(u)- f(v)|  \left|\frac{|f(u)|^p - |f(v)|^p}{f(u) -f(v)}\right| \notag \\
	\leq  \, \left\{ \frac 1 2 \sum_{uv\in E}w(uv)|f(u)-f(v)|^p\right\}^{\frac 1 p}\left\{ \sum_{uv \not \in E_0} \frac{ w(uv)}{2}\left|   \frac{|f(u)|^p - |f(v)|^p}{f(u) -f(v)}\right| ^q \right\}^{\frac 1 q}.\label{eq:FIN}
\end{gather}
We use now the following inequality,  holding  for any $x, y \in \RR$ and $p>1$  \cite{Amghibech2003}, 
	$$ \left(  \frac1 p \left|\frac{|x|^p - |y|^p}{x-y}\right| \right)^q \leq \left(  \frac1 p \left|\frac{|x|^p - |y|^p}{|x|-|y|}\right| \right)^q \leq \frac{ |x|^p+|y|^p}{2} $$
	to get
\begin{align*}
		 \sum_{uv \not \in E_0} \frac{ w(uv)}{2}\left|   \frac{|f(u)|^p - |f(v)|^p}{f(u) -f(v)}\right| ^q 	
	&\leq \frac{p^q}{4} \sum_{uv \in E} w(uv) \bigl(|f(u)|^p+|f(v)|^p\bigr)\\ &\leq \frac{ p^q \,\tau(G)\,\| f\|_{\ell^p(\V)}^p}{2}\, .
	\end{align*}
	Thus,  together with \eqref{eq:0} \eqref{eq:a} and \eqref{eq:FIN}, we finally get the inequality
	$$\frac{\int_0^{\infty} w(E(A_\lambda, \bar{A_\lambda}))d\lambda}{\int_0^{\infty} \mu(A_\lambda)d\lambda}\, \leq\, p\, \left(\frac{\frac{1}{2}\sum_{uv}w(uv)|f(u)-f(v)|^p}{\sum_{u}\mu(u)|f(u)|^p}\right)^{1/p}\(\frac{\tau(G)}{2}\)^{1/q}\, .$$
	Since $w$ ad $\mu$ are positive functions, we get
	$$\frac{\int_0^{\infty} w(E(A_\lambda, \bar{A_\lambda}))d\lambda}{\int_0^{\infty} \mu(A_\lambda)d\lambda}\geq \inf_{\lambda \geq 0}\frac{w(E(A_\lambda, \bar{A_\lambda}))}{ \mu(A_\lambda)}$$ 
	which shows in turn that there exists  $\lambda_* \in [0,\infty)$ such that 
	$$c(A_{\lambda_*})\leq p\, \R_p(f)^{1/p}\(\frac{\tau(G)}{2}\)^{1/q}\, .$$ 
	Finally, as $A_{\lambda_*}\subseteq \{u:f(u)\neq 0\}$ by construction, the statement follows.
\end{proof}
\begin{proof}[Proof of Theorem \ref{thm:cheeger-domains}]
	Let $A_1, \dots, A_m$ be  the strong nodal domains of $f$.  Lemma \ref{le:NDS} implies $\lambda_k^{(p)}\geq \R_p(f|_{A_i})$,  for any $i=1, \dots, m$. 
		Moreover, by applying Lemma \ref{lem:key1},  we deduce that for any $i$ there exists {$B_i \subseteq A_i$} such that 
	$$\R_p(f|_{A_i})\,\geq\,  (2/\tau(G))^{p-1}\(c(B_i)/p\)^p\, .$$
	{As the nodal domains are disjoint and non-empty, they belong to $\mc D_m(G)$}.  We get 
\begin{equation*}
	\max_{i = 1, \dots, m}\R_p(f|_{A_i}) \geq \min_{\{B_i\} \in \mc D_m(G)}\max_{i=1, \dots, m}\,   \!\(\!\frac 2 {\tau(G)}\!\)^{\!p-1} \!\!\! \left(\!\frac{c(B_i)}{p}\!\right)^{\!p} \!\!=\(\!\frac 2 {\tau(G)}\!\)^{\!p-1}\!\!\!\left(\!\frac{h_m(G)}{p}\!\right)^{\!p}
\end{equation*}
which finishes the proof of the first inequality in the statement. For the second one, let $\chi_A$ denote the indicator function of $A\subseteq V$. Note that $\R_p(\chi_A)=c(A)$, and let $\{A_1^*,\ldots,A_k^*\}\subseteq \mc D_k(G)$ be such that $h_k(G)=\max_{i=1,\ldots,k}c(A_i^*)$. Let $\mc X$ be  the span of $\chi_{A_1^*}, \dots, \chi_{A_k^*}$. For any $g \in \mc X$, that is $g(u)=\sum_{i=1}^k \alpha_k \chi_{A_i}(u)$, 
we have 
\begin{align*}
\sum_{u\in V} \mu(u)|g(u)|^p
	=  \sum_{i=1}^{k} \sum_{u \in A^*_i}\mu(u)|\alpha_i \chi_{A^*_i}(u)|^p= \sum_{i=1}^{k} |\alpha_i|^p \sum_{u \in V} \mu(u)|\chi_{A^*_i}(u)|^p
	\end{align*}
	Using 
	the fact that $A_i^* \cap A_j^* =\emptyset$ for $i\neq j$, we get
	$$|g(u)-g(v)|^p = \Bigl|\sum_{i=1}^{k} \alpha_i (\chi_{A^*_i}(u)-\chi_{A^*_i}(v))\Bigr|^p \leq 2^{p-1}\sum_{i=1}^{k} |\alpha_i|^p |\chi_{A^*_i}(u)-\chi_{A^*_i}(v)|^p$$
	and we obtain as a consequence
	\begin{align*}
	\R_p(g)
	\leq 2^{p-1} \frac{\sum_{i=1}^{k} |\alpha_i|^p\sum_{uv} w(uv) |\chi_{A^*_i}(u)-\chi_{A^*_i}(v)|^p}{\sum_{i=1}^{k} |\alpha_i|^p \sum_{u} \mu(u)|\chi_{A^*_i}(u)|^p}\leq 2^{p-1}\max_{i=1,\dots, k}\R_p(\chi_{A^*_i})
	\end{align*}
	where we have used the inequality $(\sum_i a_i)/(\sum_i b_i)\leq \max_i a_i/b_i$, holding for  $a_i, b_i\geq 0$. Finally note that $\gamma(\mc X\cap \SS_p)=k$ by construction, therefore $\mc X\cap \SS_p \in \mc F_k(\SS_p)$ and  the latter inequality implies  $\lambda_k^{(p)}\leq \max_{g \in \mc X\cap\SS_p}\R_p(g)\leq 2^{p-1}h_k(G)$.
\end{proof}

\section{Acknowledgements}
This work has been supported by the ERC grant NOLEPRO.

\bibliography{./library}
\bibliographystyle{plain}
\end{document}